\newcolumntype{L}[1]{>{\raggedright\let\newline\\\arraybackslash\hspace{0pt}}m{#1}}
\newcolumntype{C}[1]{>{\centering\let\newline\\\arraybackslash\hspace{0pt}}m{#1}}
\newcolumntype{R}[1]{>{\raggedleft\let\newline\\\arraybackslash\hspace{0pt}}m{#1}}
\newtheorem{theorem}{Theorem}
\newtheorem{proposition}[theorem]{Proposition}
\theoremstyle{definition}
\theoremstyle{lemma}
\theoremstyle{remark}
\newtheorem{remark}[theorem]{Remark}
\newtheorem{assumption}[theorem]{Assumption}
\Crefname{assumption}{Assumption}{Assumptions}
\numberwithin{theorem}{section}
\numberwithin{equation}{section}
\numberwithin{table}{section}
\numberwithin{figure}{section}
\definecolor{myBlue}{RGB}{30,144,255} % dodger blue
\definecolor{myGreen}{RGB}{69,169,0} % chatreuse
\definecolor{myRed}{RGB}{165,12,42} 
\definecolor{myOrange}{RGB}{225,92,22} 
\definecolor{mycolor0}{rgb}{0.12156862745098,0.466666666666667,0.705882352941177} % orange
\definecolor{mycolor1}{rgb}{0.00000,0.44700,0.74100}% blau
\definecolor{mycolor2}{rgb}{0.85000,0.32500,0.09800}% rot/orange
\definecolor{mycolor3}{rgb}{0.49400,0.18400,0.55600}% lila
\definecolor{mycolor4}{rgb}{0.92900,0.69400,0.12500}% gelb
\definecolor{mycolor5}{rgb}{0.46600,0.67400,0.18800}% gruen
\definecolor{mycolor6}{rgb}{0.30100,0.74500,0.93300}% hellblau
\definecolor{mycolor7}{rgb}{0.63500,0.07800,0.18400}% pflaume
\newcommand{\delete}[1]{ }
\def\N{\mathbb{N}}
\def\R{\mathbb{R}}
\newcommand\dx{\,\mathrm{d}x}
\newcommand{\hook}{\ensuremath{\hookrightarrow}}
\newcommand{\calO}{\ensuremath{\mathcal{O}} }
\newcommand{\calP}{\ensuremath{\mathcal{P}} }
\newcommand{\Q}{\ensuremath{\mathcal{Q}} }
\newcommand{\calT}{\ensuremath{\mathcal{T}} }
\newcommand{\V}{\ensuremath{\mathcal{V}}}
\newcommand{\cHV}{\ensuremath{{\mathcal{H}_{\V}}} }
\newcommand{\cHQ}{\ensuremath{{\mathcal{H}_{\Q}}} }
\newcommand{\f}{\ensuremath{f}}
\newcommand{\g}{\ensuremath{g}}
\newcommand{\Amat}{\mathbf{A}}
\newcommand{\Bmat}{\mathbf{B}}
\newcommand{\Cmat}{\mathbf{C}}
\newcommand{\Dmat}{\mathbf{D}}
\newcommand{\Imat}{\mathbf{I}}
\newcommand{\Mmat}{\mathbf{M}}
\newcommand{\Nmat}{\mathbf{N}}
\newcommand{\Smat}{\mathbf{S}}
\newcommand{\Tmat}{\mathbf{T}}
\newcommand{\Cfg}{\ensuremath{C_\text{rhs}}}
\newcommand{\Ctau}{\ensuremath{\Cmat_\tau}}
\DeclareFontFamily{U}{matha}{\hyphenchar\font45}
\DeclareFontShape{U}{matha}{m}{n}{
	<-6> matha5 <6-7> matha6 <7-8> matha7
	<8-9> matha8 <9-10> matha9
	<10-12> matha10 <12-> matha12
}{}
\DeclareSymbolFont{matha}{U}{matha}{m}{n}
\DeclareFontFamily{U}{mathx}{\hyphenchar\font45}
\DeclareFontShape{U}{mathx}{m}{n}{
	<-6> mathx5 <6-7> mathx6 <7-8> mathx7
	<8-9> mathx8 <9-10> mathx9
	<10-12> mathx10 <12-> mathx12
}{}
\DeclareSymbolFont{mathx}{U}{mathx}{m}{n}
\DeclareMathDelimiter{\vvvert} {0}{matha}{"7E}{mathx}{"17}%
\DeclareMathOperator{\newton}{N}
\DeclareMathOperator{\meter}{m}
\DeclareMathOperator{\millimeter}{mm}
\DeclareMathOperator{\second}{s}
\DeclareMathOperator{\minute}{min}
\DeclareMathOperator{\hour}{h}
\begin{document}
\title[A Novel Iterative Time Integration Scheme for Linear Poroelasticity]{A Novel Iterative Time Integration Scheme\\ for Linear Poroelasticity}
\author[]{R.~Altmann$^\dagger$, M.~Deiml$^\ddagger$}
\address{${}^{\dagger}$ Institute of Analysis and Numerics, Otto von Guericke University Magdeburg, Universit\"atsplatz 2, 39106 Magdeburg, Germany}
\address{${}^{\ddagger}$ Institute of Mathematics, University of Augsburg, Universit\"atsstr.~12a, 86159 Augsburg, Germany}
\email{robert.altmann@ovgu.de, matthias.deiml@uni-a.de}
%
%\thanks{}
%
\date{\today}
\keywords{}
%
%
%=============================================================================
%=========  Abstract
%=============================================================================
\begin{abstract}
Within this paper, we introduce and analyze a novel time stepping scheme for linear poroelasticity. In each time frame, we iteratively solve the flow and mechanics equations with an additional damping step for the pressure variable. Depending on the coupling strength of the two equations, we explicitly quantify the needed number of inner iteration steps to guarantee first-order convergence. Within a number of numerical experiments, we confirm the theoretical results and study the dependence of inner iteration steps in terms of the coupling strength. Moreover, we compare our method to the well-known fixed-stress scheme. 
\end{abstract}
%
%
%=============================================================================
%=========  Title / Contents
%=============================================================================
\maketitle
%\setcounter{tocdepth}{3}
%\tableofcontents
%
{\tiny {\bf Key words.} poroelasticity, semi-explicit time discretization, decoupling, iterative scheme}\\
\indent
{\tiny {\bf AMS subject classifications.}  {\bf 65M12}, {\bf 65M20}, {\bf 65L80}, {\bf 76S05}} 
%
%65M12 = Stability and convergence of numerical methods PDEs
%65M20 = Method of lines
%65L80 = Methods for differential-algebraic equations
%76S05 = Flows in porous media; filtration
%65M60 = Finite elements, Rayleigh-Ritz and Galerkin methods, finite methods
% 
%
%=============================================================================
%=========  Introduction
%=============================================================================
\section{Introduction}
% poroelasticity
The equations of poroelasticity appear in various fields such as in geomechanics modeling porous rocks~\cite{DetC93,Zob10} or in medical applications modeling soft tissue~\cite{RNM+03, PieRV22}. The mathematical model, also called Biot's consolidation model, was introduced in~\cite{Bio41} and describes the dynamics of an elastic solid containing pores which are filled with a liquid. Hence, it considers a coupled system with one equation for the elastic deformation of the solid under physical stress and a second equation for the flow of the liquid arising from pressure differentials. 

% numerical methods
In this paper, we focus on the time discretization of the spatially discretized poroelasticity model. For details on the spatial discretization we refer to~\cite{MurL92,PhiW07a,PhiW08,LeeMW17,HonK18} and the references therein. Related space--time approaches were considered in~\cite{BauRK17,ArfS22}. A direct application of the implicit Euler scheme results in an unconditionally stable first-order method~\cite{ErnM09}.
% objective
In each time step, however, it requires the solution of a large (coupled) linear system. Especially in three-dimensional applications, this may become a severe computational challenge. For this reason, recent work has focused on the search of more efficient methods. 
% iterative schemes
Iterative schemes, for instance, decouple the equations in the sense that either the flow or the mechanics equation is solved first, followed by the solution of the remaining problem~\cite{WheG07,MikW13}. This is referred to as {\em fixed strain}, {\em fixed stress}, {\em drained split}, or {\em undrained split}. All these methods have in common that the system is split into two smaller subsystems for which well-known preconditioners can be applied~\cite{LeeMW17}. Unfortunately, the needed number of inner iteration steps is unknown and the methods partially exhibit stability problems calling for a problem-specific tuning, cf.~\cite{KimTJ11a,KimTJ11b,StoBKNR19}. % BotBNKR17

% semi-explicit
Yet another decoupling approach is the semi-explicit Euler method introduced in~\cite{AltMU21,AltM22}, see also the related splitting strategy~\cite{VabVK14}. Here, no inner iteration is needed but the scheme is only stable if the coupling of the mechanics and flow equation is sufficiently {\em weak}, which is expressed in terms of the material parameters. In contrast to the mentioned iterative schemes, however, it can also be generalized to construct higher-order schemes~\cite{AltMU22}. 

% new method
This paper is devoted to a novel time integration scheme which combines the iterative idea with the semi-explicit approach. More precisely, we consider an iterative scheme with an a priori specified number of inner iteration steps depending on the coupling strength of the elastic and the flow equation. This then yields a very efficient time stepping scheme for problems with a moderate coupling, which includes most geomechanical applications. The construction of the scheme is based on the semi-explicit Euler scheme extended with an inner fixed point iteration and a relaxation step. 
% convergence  
While the convergence of the scheme for an unlimited number of inner iteration steps can be shown easily, we prove an explicit upper bound for the number of iterations needed to guarantee first-order convergence. 

% outline
This paper is organized as follows. In Section~\ref{sec:prelim} we recall the equations of linear poroelasticity and introduce the parameter~$\omega$ which measures the coupling strength of the elastic and the flow equation. Afterwards, we summarize known time stepping schemes for the semi-discrete equations in Section~\ref{sec:methods}. Moreover, we introduce the novel iterative scheme including a damping step for the pressure variable. The proof of convergence is then subject of Section~\ref{sec:convergence}. Here, we present explicit bounds on the number of inner iteration steps in terms of the coupling parameter. Finally, we present three numerical examples in Section~\ref{sec:numerics} proving the competitiveness of the proposed scheme.

% notation
\subsection*{Notation}
As usual, $\| v\|$ denotes the Euclidean norm of a vector $v$. Moreover, we write $\|v\|^2_\Mmat = (\Mmat v,v) = \|\Mmat^{1/2} v\|^2$ for a symmetric and positive definite matrix~$\Mmat$. Here, $\Mmat^{1/2}$ is the square root of the matrix~$\Mmat$, which is again symmetric and positive definite. For vector-valued functions, we write~$\|v\|_{L^\infty(\Mmat)}$ for the maximum (over time) of $\|v(t)\|_{\Mmat}$. 
%
%
%=============================================================================
%=========  Model Problem 
%=============================================================================
\section{Preliminaries}\label{sec:prelim}
In this preliminary section, we introduce the equations of poroelasticity, including the {\em coupling parameter} which plays a key role in this paper. Afterwards, we shortly discuss the spatial discretization of the system equations. 
%
%
%=============================================================================
\subsection{Linear poroelasticity and its weak formulation}
We consider a bounded Lipschitz domain~$\Omega \subseteq \R^m$, $m\in\{2,3\}$, as computational domain as well as a bounded time interval~$[0,T]$ with~$T>0$. The quasi-static Biot poroelasticity model~\cite{Bio41,DetC93,Sho00} reads as follows: find the deformation~$u\colon [0,T]\times\Omega\rightarrow\R^{m}$ and the pressure~$p\colon [0,T]\times\Omega\rightarrow\R$, satisfying 
%
%\begin{subequations}
%\label{eq:poro:strong}
\begin{align*}
	- \nabla\cdot\sigma(u) + \alpha \nabla p 
	&= {\hat f} \qquad\text{in } (0,T]\times\Omega, \\ %\label{eq:pdes:a}\\
	\partial_{t}\, \big(\alpha \nabla\cdot u + \tfrac{1}{M} p \big) + \nabla\cdot \big(\tfrac{\kappa}{\nu} \nabla p \big) 
	&= {\hat g} \qquad\text{in } (0,T]\times\Omega. %\label{eq:pdes:b}
\end{align*}
%\end{subequations}
%
Therein, 
\[
	\sigma(u) 
	= 2\mu\, \varepsilon(u) + \lambda\, \big(\nabla \cdot u\big)\, \operatorname{id} 
	\qquad\text{with}\qquad
	\varepsilon(u) 
	= \tfrac12\, \big(\nabla u + (\nabla u)^*\big)
\]
denotes the stress tensor of the solid with the Lam{\'e} coefficients~$\lambda$ and~$\mu$. The remaining parameters are the Biot--Willis fluid--solid coupling coefficient~$\alpha$, the Biot modulus~$M$ describing how compressible the fluid is under pressure, the intrinsic permeability~$\kappa$, and the fluid viscosity~$\nu$. The right-hand sides model the external influence on the system. More precisely, ${\hat f}$ denotes the volumetric load and~${\hat g}$ the fluid source. 

% ic + bc
For the well-posedness of the system, we further assume given initial data 
\[
	u(0,\bullet) = u^0, \qquad
	p(0,\bullet) = p^0
\]
satisfying the consistency condition
\[
	- \nabla\cdot\sigma(u^0) + \alpha \nabla p^0 
	= {\hat f}(0) \qquad\text{in } \Omega
\]
as well as boundary conditions. Within this paper, we restrict ourselves to homogeneous Dirichlet boundary conditions for~$u$ and for~$p$. Results on the unique solvability of the system are discussed in~\cite{Sho00}. 

% weak formulation
For the weak formulation of the poroelastic equations, we introduce function spaces for the pressure and displacement fields -- including the homogeneous boundary conditions -- as well as respective $L^2$-spaces, namely 
\[
	\V \coloneqq \big[H^1_0(\Omega)\big]^m, \qquad
	\cHV \coloneqq \big[L^2(\Omega)\big]^m, \qquad
	\Q \coloneqq H^1_0(\Omega), \qquad 
	\cHQ \coloneqq L^2(\Omega).
\]
We pair $\V$ with the norm $\|\nabla \bullet\| \coloneqq \|\nabla \bullet\|_{[L^2(\Omega)]^{m,m}}$ for which the positive definiteness follows from the homogeneous boundary conditions.
We further introduce the bilinear forms~$a\colon\V\times\V\to\R$, $b\colon\Q\times\Q\to\R$, $c\colon\cHQ\times\cHQ\to\R$, and $d\colon\V\times\cHQ\to\R$ by 
\begin{align*}
	a(u,v) \coloneqq \int_\Omega \sigma(u) : \varepsilon(v) \dx, \qquad 
	&b(p,q) \coloneqq \int_\Omega \frac{\kappa}{\nu}\, \nabla p \cdot \nabla q \dx,\\
	c(p,q) \coloneqq \int_\Omega \frac{1}{M}\, p\, q \dx, \qquad 
	&d(u,q) \coloneqq \int_\Omega \alpha\, (\nabla \cdot u)\, q \dx 
\end{align*}
for $u,\, v \in \V$ and $p,\, q \in \Q$. Note that the bilinear forms $a$, $b$, and $c$ are symmetric and coercive with corresponding constants~$c_a$, $c_b$, and $c_c$, respectively. For $a$, which includes the classical double dot notation from continuum mechanics, this follows from Korn's inequality; see~\cite[Thm.~6.3-4.~(iv)]{Cia88}. Moreover, all four bilinear forms are bounded with stability constants denoted by~$C_a$, $C_b$, $C_c$, and $C_d$, respectively. In particular, we have $d(u, q) \le C_d\, \|u\|_\V \|q\|_{\cHQ}$ and, using integration by parts, also~$d(u, q) \le C_d\, \|u\|_\cHV \|q\|_{\Q}$.

%%% Arxiv Version %%%
Then, the weak formulation of linear poroelasticity seeks abstract functions~$u\colon [0,T] \to \V$ and $p\colon [0,T] \to \Q$ such that  
\begin{subequations}
	\label{eq:poro:weak}
	\begin{align}
		a(u, v) - d(v, p) 
		&= \langle{\hat \f}, v\rangle, \label{eq:poro:weak:a} \\
		d(\dot u, q) + c(\dot p, q) + b(p, q) 
		&= \langle{\hat \g}, q\rangle \label{eq:poro:weak:b} 
	\end{align}
\end{subequations}	
for all test functions $v \in \V$ and $q \in \Q$. Here, $\langle\bullet, \bullet\rangle$ denotes the duality pairing in~$\V$ and $\Q$, respectively. 
Note that the two equations decouple in the case~$\alpha=0$. More precisely, equation~\eqref{eq:poro:weak:a} would turn into an elliptic and~\eqref{eq:poro:weak:b} into a parabolic equation, which explains the expression of~\eqref{eq:poro:weak} being an {\em elliptic--parabolic equation}.
%
%
%=============================================================================
\subsection{Coupling parameter} \label{sec:prelim:omega}
For the simulation of poroelasticity, the coupling strength of the elliptic and parabolic equation plays an important role. For instance, certain time stepping schemes only converge for problems which are weakly coupled; see~\cite{AltMU21,AltMU22,KimTJ11b}. One possible measure reads 
\[
	\tilde \omega
	\coloneqq \frac{C_d^2}{c_ac_c},
\]
which is motivated, e.g., by the convergence analysis presented in~\cite{AltMU21}. Following~\cite[Sect.~6.3]{Cia88}, we have due to the assumed boundary conditions $2\, \|\varepsilon(v)\|^2 = \|\nabla v\|^2 + \|\nabla\cdot v\|^2_\cHQ$ and, hence, 
\begin{align*}
	a(v,v) 
	= \int_\Omega \sigma(v) : \varepsilon(v) \dx
	= 2\mu\, \|\varepsilon(v)\|^2 + \lambda\, (\nabla \cdot v \operatorname{id}, \varepsilon(v))
	= \mu\, \|v\|_\V^2 + (\mu + \lambda)\, \|\nabla\cdot v\|_\cHQ^2.
\end{align*}
This implies $c_a\ge\mu$. Together with $\|\nabla \cdot v\|_{\cHQ}^2 \le m\, \|v\|^2_{\V}$, where $m$ is again the dimension of the computational domain, this then yields~$\tilde \omega \le m\,\alpha^2 M / \mu$. In particular, this constant satisfies $d(v,q) \le \sqrt{\tilde \omega}\, \|v\|_a \|q\|_c$ with the problem-dependent norms~$\|\bullet\|^2_a \coloneqq a(\bullet,\bullet)$ and~$\|\bullet\|^2_c \coloneqq c(\bullet,\bullet)$. 

% poroelasticty
The choice of the parameter~$\tilde \omega$ is reasonable for general elliptic--parabolic problems. In the particular case of poroelasticity, however, we can exploit the special structure of the bilinear forms to improve this measure. Due to~$(\mu+\lambda)\, \|\nabla\cdot v\|_{\cHQ}^2 \le a(v,v)$, we obtain the estimate
\begin{equation}
	\label{eq:improvedOmega}	
	d(v,q) 
	\le \alpha\, \|\nabla \cdot v\|_{\cHQ}\|q\|_{\cHQ}
	\le \frac{\alpha \sqrt{M}}{\sqrt{\lambda+\mu}}\, \|v\|_a \|q\|_c. 
\end{equation}
This motivates to define the {\em coupling parameter} $\omega$ as
\[
	\omega 
	\coloneqq \frac{\alpha^2 M}{\mu+\lambda}.
\]

In the field of geomechanics, $M$ and the Lam{\'e} coefficients are usually of similar magnitude, while~$\alpha$ is bounded from above by~$1$. Hence, $\omega$ is expected to be in the range of~$1$. In medical applications such as brain matter simulations, there is a wide range of physical constants. Some make the assumption that~$1 / M \ll 1$, meaning that the material is almost incompressible, cf.~\cite{EliRT21}. This, in turn, implies that~$\omega$ is very large. Three particular examples for the coupling strength are collected in Table~\ref{tab:constants}.
\begin{table}
	\caption{Parameters for three materials, namely Westerly granite~\cite{DetC93} and shale~\cite{ZhaSMC19} (in combination with water), and brain matter~\cite{PieRV22}. }	
	\begin{center}
		\begin{tabular}{c c|c c c}
			parameter & unit & Westerly granite & shale & brain matter \\
%			& & \cite{DetC93} & \cite{ZhaSMC19} & \cite{PieRV22} \\			
			\hline
			\hline
			$\lambda$ & $\newton/\meter^2$ & \num{1.5e10} & \num{1.0e10} & \num{5.4e4} \\
			$\mu$ & $\newton/\meter^2$ & \num{1.5e10} & \num{1.0e10} & \num{5.5e2} \\
			$\alpha$ & & 0.47 & 0.92 & 1 \\
			$\kappa / \nu$ & $\meter^4/(\newton\second)$ & \num{4.0e-16} & \num{5.8e-14} & \num{1.6e-9} \\
			% Der Wert für PieRV22 steht nicht direkt im Paper, sondern in dessen Quelle VarCTHLTV16			
			$M$ & $\newton/\meter^2$ & \num{7.64e10} & \num{9.5e10}& \num{2.6e3} \\
			\hline
			$\omega$ & & 0.56 & 4.02 & 0.05 \\
		\end{tabular}
	\end{center}
	\label{tab:constants}
\end{table}
%
%
%=============================================================================
\subsection{Spatial discretization}
We close this section with a short discussion of the spatial discretization using (conforming) finite elements and refer to~\cite{ErnM09} for further details. Within~\cite{ErnM09} it is suggested to use for the displacement piecewise polynomials of one degree higher than for the pressure. The most common choice is the lowest-order case which considers the discrete spaces 
\[
	V_h \coloneqq \V \cap [\calP_2(\calT)]^m, \qquad
	Q_h \coloneqq \Q \cap \calP_1(\calT)
\] 
with $\calT$ denoting a regular triangulation of the computational domain~$\Omega$ and $\calP_\ell(\calT)$ the space of piecewise polynomials of degree~$\ell$ corresponding to this mesh. Hence, the displacement is approximated by continuous piecewise quadratics, whereas continuous piecewise linears are used for the pressure. Using the same notation for the semi-discrete variables as in the continuous setting, the finite element discretization results in a system of the form 
\begin{align}
\label{eq:semiDiscretePoro}
	\begin{bmatrix} 0 & 0 \\ \Dmat & \Cmat \end{bmatrix}
	\begin{bmatrix} \dot u \\ \dot p \end{bmatrix}
	= 
	\begin{bmatrix} -\Amat & \phantom{-}\Dmat^T \\ 0 & -\Bmat \end{bmatrix}
	\begin{bmatrix} u \\ p \end{bmatrix}
	+ 
	\begin{bmatrix} \f \\ \g \end{bmatrix}.
\end{align}
Therein, the matrices~$\Amat\in\R^{n_u,n_u}$, $\Bmat, \Cmat\in\R^{n_p,n_p}$, and $\Dmat\in\R^{n_p,n_u}$ are the discrete versions of the bilinear forms $a$, $b$, $c$, and $d$, respectively. Moreover, the right-hand sides~$\hat \f$ and $\hat \g$ result in the vector-valued functions~$\f\colon [0,T]\to\R^{n_u}$ and~$\g\colon [0,T]\to\R^{n_p}$. 
\begin{remark}[Differential--algebraic structure]
Regardless of the particular method used for the spatial discretization, the leading matrix on the left-hand side of~\eqref{eq:semiDiscretePoro} is singular. Hence, the semi-discretized poroelasticity model equals a system of differential--algebraic equations, see also~\cite{AltMU21c}. 
% consistency 
The discrete version of the consistency condition for the initial data reads~$\Amat {u}^{0} - \Dmat^T p^0 = f(0)$ and will be assumed throughout this paper. 
\end{remark}
In the following, we are not restricted to the mentioned finite element scheme of piecewise quadratics combined with piecewise linears. We only assume a conforming discretization such that the semi-discrete system has the form~\eqref{eq:semiDiscretePoro} with matrices~$\Amat$, $\Bmat$, and~$\Cmat$ being symmetric and positive definite as well as~$\Dmat$ having full (row) rank. 
\begin{remark}[Coupling strength]
\label{rem:discreteOmega}
Considering a conforming finite element discretization, the discrete counterpart of estimate~\eqref{eq:improvedOmega} reads 
\begin{equation*}
	q^T\Dmat v
%	= d(v,q) 
	\le \sqrt{\omega}\, \|v\|_\Amat \|q\|_\Cmat 
\end{equation*}
for all (coefficient) vectors $v\in\R^{n_u}$, $q\in\R^{n_p}$. This, in turn, implies the estimate
\[
	\rho(\Cmat^{-1}\Dmat\Amat^{-1}\Dmat^T) 
	\le \| \Cmat^{-1}\Dmat\Amat^{-1}\Dmat^T \|_\Cmat 
	\le \omega,
\]
which will be applied several times in the upcoming analysis. 
% proof for \|\Cmat^{-1}\Dmat\Amat^{-1}\Dmat^T \|_\Cmat \le \omega
%\begin{align*}
%	\| \Amat^{-1}\Dmat^T p\|^2_\Amat
%	= p^T \Dmat (\Amat^{-1}\Dmat^T p)
%	\le \sqrt{\omega}\, \|p\|_\Cmat \|\Amat^{-1}\Dmat^T p\|_\Amat
%	\Rightarrow
%	\| \Amat^{-1}\Dmat^T p\|_\Amat \le \sqrt{\omega}\, \|p\|_\Cmat
%\end{align*}
%this implies for $y = \Cmat^{-1}\Dmat\Amat^{-1}\Dmat^T p$ that 
%\begin{align*}
%	\| y \|^2_\Cmat
%	= y^T \Dmat(\Amat^{-1}\Dmat^T p)
%	\le \sqrt{\omega}\, \|y\|_\Cmat \|\Amat^{-1}\Dmat^T p\|_\Amat
%	\le \omega\, \|y\|_\Cmat  \|p\|_\Cmat
%	\Rightarrow
%	\| y \|^2_\Cmat \le \omega\, \|p\|_\Cmat
%\end{align*}
\end{remark}
%
%
%=============================================================================
%=========  Numerical Schemes
%=============================================================================
\section{Time Stepping Schemes}\label{sec:methods}
This section is devoted to the temporal discretization of the semi-discrete system~\eqref{eq:semiDiscretePoro}. After a short survey of well-known time stepping methods, we introduce a novel approach in the intersection of semi-explicit and iterative schemes. 

For all time stepping schemes we consider an equidistant decomposition of the time interval~$[0,T]$ with step size~$\tau$. The resulting discrete time points are denoted by~$t^n = n\tau$ for $0 \le n \le N$. Moreover, approximations of~$u(t^n)$ and~$p(t^n)$ are denoted by~$u^n$ and $p^n$, respectively. In the same manner, we write~$\f^{n} \coloneqq \f(t^n) \in \R^{n_u}$ and~$\g^{n} \coloneqq \g(t^n) \in \R^{n_p}$.
%
%
%=============================================================================
\subsection{Implicit Euler scheme}\label{sec:methods:implEuler}
In the context of poroelasticity, a popular discretization method is the implicit Euler scheme for which the derivatives in time are replaced by a simple difference quotient. Multiplying the second equation by~$\tau$ and introducing 
\[
	\Ctau 
	\coloneqq \Cmat + \tau \Bmat, 
\]
this then leads to 
\begin{align}
\label{eq:implEuler}
	\begin{bmatrix} \Amat & -\Dmat^T \\ \Dmat & \phantom{-}\Ctau \end{bmatrix}
	\begin{bmatrix} u^{n+1} \\ p^{n+1} \end{bmatrix}
	= 
	\begin{bmatrix} 0 & 0 \\ \Dmat & \Cmat \end{bmatrix}
	\begin{bmatrix} u^{n} \\ p^{n} \end{bmatrix}
	+ 
	\begin{bmatrix} \f^{n+1} \\ \tau \g^{n+1} \end{bmatrix}.
\end{align}
This scheme is well-posed, since the leading matrix on the left-hand side is invertible for the given assumptions on $\Amat$, $\Bmat$, $\Cmat$, and $\Dmat$. Moreover, it is unconditionally stable. A detailed error analysis for the implicit Euler discretization in combination with a finite element discretization in space is given in~\cite{ErnM09}. As expected, this approach yields first-order convergence in time. 

The drawback of this fully implicit discretization is the fact that one needs to solve a large coupled system in each time step. Hence, the computation of an approximate solution can be very expensive, especially for three-dimensional applications. Moreover, established preconditioners for matrices of the form~$\Amat$ and~$\Ctau$ cannot be applied directly, cf.~\cite{LeeMW17}. 
%
%
%=============================================================================
\subsection{Iterative schemes}\label{sec:methods:iterative}
To avoid the solution of a large linear system in each time step, several decoupling strategies were introduced in the past couple of years. Here, decoupling means that the mechanics and the flow equation are solved sequentially. This not only implies that we have to solve two smaller rather than one large system but also facilitates the application of well-known preconditioners. 

% matrix splitting
Iterative schemes replace the solution of one implicit step by a sequence of decoupled solves. Hence, each time step contains an inner iteration, which is based on a matrix splitting of the form 
\[
	\begin{bmatrix} \Amat & -\Dmat^T \\ \Dmat & \phantom{-}\Ctau \end{bmatrix}
	= \Mmat - \Nmat. 
\]
The resulting time stepping scheme then reads as follows: first initialize~$u^{n+1}_0 \coloneqq u^n$ and~$p^{n+1}_0 \coloneqq p^n$ with the approximations of the previous time step. Then consider the inner iteration 
\[
	\Mmat \begin{bmatrix} u^{n+1}_{k+1} \\ p^{n+1}_{k+1} \end{bmatrix} 
	= \Nmat \begin{bmatrix} u^{n+1}_{k} \\ p^{n+1}_{k} \end{bmatrix} + \begin{bmatrix} \f^{n+1} \\ \tau \g^{n+1} + \Dmat u^{n} + \Cmat p^{n} \end{bmatrix} 
\]
for $k \ge 0$ until some convergence criteria is reached, e.g., until the residuum is smaller than a predefined tolerance. In the following, we specify the matrix splitting of four well-known iterative schemes. We would like to emphasize that all these schemes have in common that the matrix $M$ is block triangular. As such, the above system can be solved for~$u^{n+1}_{k+1}$ and~$p^{n+1}_{k+1}$ sequentially. 

%% four examples 
\begin{itemize}
	\item Drained split \cite{ArmS92, SimM92, KimTJ11b}:
	\begin{equation*}
		\Mmat \coloneqq \begin{bmatrix} \Amat & 0 \\ \Dmat & \Ctau \end{bmatrix}, \qquad
		\Nmat \coloneqq \begin{bmatrix} 0 & \Dmat^T \\ 0 & 0 \end{bmatrix} 
	\end{equation*}
	\item Undrained split \cite{ArmS92, KimTJ11b}:
	\begin{equation*}
		\Mmat \coloneqq \begin{bmatrix} \Amat + \Dmat^T\Bmat^{-1}\Dmat & 0 \\ \Dmat & \Ctau \end{bmatrix}, \qquad
		\Nmat \coloneqq \begin{bmatrix} \Dmat^T\Bmat^{-1}\Dmat & \Dmat^T \\ 0 & 0 \end{bmatrix}
	\end{equation*}
	\item Fixed-strain split \cite{SetM98, KimTJ11a}: 
	\begin{equation*}
		\Mmat \coloneqq \begin{bmatrix} \Amat & -\Dmat^T \\ 0 & \phantom{-}\Ctau \end{bmatrix}, \qquad
		\Nmat \coloneqq \begin{bmatrix} 0 & 0 \\ -\Dmat & 0 \end{bmatrix}
	\end{equation*}
	\item Fixed-stress split \cite{SetM98, KimTJ11a}:
	\begin{equation*} 
		\Mmat \coloneqq \begin{bmatrix} \Amat & -\Dmat^T \\ 0 & \Ctau + \Dmat\Amat^{-1}\Dmat^T \end{bmatrix}, \qquad
		\Nmat \coloneqq \begin{bmatrix} 0 & 0 \\ -\Dmat & \Dmat\Amat^{-1}\Dmat^T \end{bmatrix}
	\end{equation*}
\end{itemize}

A drawback of these iterative methods is that the error analysis usually does not consider stability under a finite number of iterations. Additionally, only the undrained and fixed-stress split are unconditionally stable, but they require approximations of the matrices $\Dmat^T\Bmat^{-1}\Dmat$ and~$\Dmat\Amat^{-1}\Dmat^T$, respectively. Whether these methods converge then strongly depends on the choice of the respective tuning parameters, cf.~\cite{StoBKNR19}.
\begin{remark}
Indeed it can be shown, that the fixed-stress split exactly converges to the implicit Euler solution for two inner iteration steps if~$\Dmat\Amat^{-1}\Dmat^T$ is computed exactly. This, however, is not feasible in practice~\cite[Rem.~5]{KimTJ11a}. 
\end{remark}
%
%
%=============================================================================
\subsection{Semi-explicit Euler scheme}\label{sec:methods:semiexplEuler}
With the aim to circumvent an inner iteration but still allow a decoupling of the equations, the semi-explicit Euler scheme was introduced in~\cite{AltMU21}. This scheme is again based on the implicit Euler method with the modification that the pressure variable lags one time step behind. More precisely, we solve  
\[
	\begin{bmatrix} \Amat & 0 \\ \Dmat & \Ctau \end{bmatrix}
	\begin{bmatrix} u^{n+1} \\ p^{n+1} \end{bmatrix}
	= 
	\begin{bmatrix} 0 & \Dmat^T \\ \Dmat & \Cmat^{\phantom{T}} \end{bmatrix}
	\begin{bmatrix} u^{n} \\ p^{n} \end{bmatrix}
	+ 
	\begin{bmatrix} \f^{n+1} \\ \tau \g^{n+1} \end{bmatrix}.
\]
Note that this corresponds to the drained split scheme with a single inner iteration step. The analysis presented in~\cite{AltMU21} reveals that this is indeed a first-order scheme as long as the assumption~$\omega \le 1$ holds. Hence, this scheme is only applicable in the {\em weakly coupled} regime. 
%
%
%=============================================================================
\subsection{A novel iterative scheme with damping}\label{sec:methods:new}
This section is devoted to the introduction of a novel time stepping scheme which combines the benefits of the previous approaches. First, we are interested in a scheme which is also applicable for cases with~$\omega > 1$. Second, we desire a fixed amount of inner iteration steps without any termination condition, still assuring first-order convergence. In essence, the proposed scheme is a modification of the drained approach with an additional relaxation step, which depends on the coupling parameter~$\omega$.

% new scheme
Let $\gamma \coloneqq 2 / (2 + \omega)$ be the relaxation factor and $K \in \N$ the (fixed) number of inner iteration steps, which we link to the coupling parameter~$\omega$ later on. For each time step we initialize
\begin{subequations}
\label{eq:scheme}
\begin{align}
\label{eq:scheme:init}
	u^{n+1}_0 \coloneqq u^{n}, \qquad 
	p^{n+1}_0 \coloneqq p^{n}.
\end{align}
Then, for $k = 0, \dots, K-1$ we solve the (block triangular) system  
\begin{align}
\label{eq:scheme:hat}	
	\begin{bmatrix} \Amat & 0 \\ \Dmat & \Ctau \end{bmatrix}
	\begin{bmatrix} {\hat u}^{n+1}_{k+1} \\ {\hat p}^{n+1}_{k+1} \end{bmatrix}
	= 
	\begin{bmatrix} 0 & \Dmat^T \\ 0 & 0 \end{bmatrix}
	\begin{bmatrix} u^{n+1}_k \\ p^{n+1}_k \end{bmatrix}
	+
	\begin{bmatrix} \f^{n+1} \\ \tau \g^{n+1} + \Dmat u^n + \Cmat p^n \end{bmatrix}
\end{align}
for~$\hat u^{n+1}_{k+1}$ and~$\hat p^{n+1}_{k+1}$. Except for the last step we dampen the pressure variable, i.e., for $k = 0, \dots, K-2$ we set  
\begin{equation} 
\label{eq:scheme:relaxation}
	u_{k+1}^{n+1}
 	\coloneqq \hat{u}_{k+1}^{n+1}, \qquad
	p_{k+1}^{n+1} 
	\coloneqq \gamma\, \hat{p}_{k+1}^{n+1} + (1 - \gamma)\, p_{k}^{n+1}.
\end{equation}
Finally, the approximations at time $t=t^{n+1}$ are given by 
\begin{equation}
\label{eq:scheme:final}
	u^{n+1} 
	\coloneqq u_K^{n+1}
	\coloneqq \hat{u}_K^{n+1},\qquad 
	p^{n+1} 
	\coloneqq p_K^{n+1}
	\coloneqq \hat{p}_K^{n+1}.
\end{equation}
\end{subequations}
We would like to emphasize that there is no relaxation for the deformation necessary, since the previous (inner) iterate $u^{n+1}_k$ does not appear in~\eqref{eq:scheme:hat}. Moreover, the relaxation is skipped in the final step and the scheme is well-posed, since the leading matrix in~\eqref{eq:scheme:hat} is invertible.  
\begin{remark}[No convergence with final relaxation] 
\label{rem:final-relaxation}
If we would include the relaxation also in the final step, then one can observe that the scheme does not converge. This is no surprise, since this modification would lead to a direct linear influence of~$p^n$ on~$p^{n+1}$. 
\end{remark}
The computation of one time step is summarized in Algorithm~\ref{alg:oneTimeStep}. 
\begin{algorithm}
	\caption{One time step of scheme~\eqref{eq:scheme}}
	\label{alg:oneTimeStep}
	\begin{algorithmic}
		\State {\bf Input}: current approximations $u^n\in\R^{n_u}$, $p^n\in\R^{n_p}$, step size $\tau$
		\vspace{0.3em}
		\State $p \gets p^n$
		\State $r \gets \tau g^{n+1} + \Dmat u^n + \Cmat p^n$
		\vspace{0.3em}
		\For{$k = 0 : K-2$}
		\State $u \gets \Amat^{-1} (f^{n+1} + \Dmat^Tp )$
		\State $\hat p \gets \Ctau^{-1} (r - \Dmat u)$
		\State $p \gets \gamma\, \hat p + (1 - \gamma)\, p$ 
		\EndFor
		\vspace{0.3em}
		\State $u^{n+1} \gets \Amat^{-1} (f^{n+1} + \Dmat^Tp )$ 
		\State $p^{n+1} \gets \Ctau^{-1} (r - \Dmat u^{n+1})$ 
	\end{algorithmic}
\end{algorithm}
\begin{remark}[Interpretation as matrix splitting] \label{rem:matrix-splitting}
Apart from the last step, the inner iteration of the proposed scheme~\eqref{eq:scheme} can be written as the matrix splitting 
\[
	\Mmat \coloneqq \frac{1}{\gamma}\, \begin{bmatrix} \Amat & 0 \\ \Dmat & \Ctau\end{bmatrix}, \qquad
	\Nmat \coloneqq \begin{bmatrix} 0 & \Dmat^T \\ 0 & 0 \end{bmatrix} +
	\frac{1-\gamma}{\gamma}\, \begin{bmatrix} \Amat & 0 \\ \Dmat & \Ctau\end{bmatrix}.
\]
For the convergence of the inner iteration, one needs to analyze the spectral radius of~$\Mmat^{-1}\Nmat$. By Remark~\ref{rem:discreteOmega} and the positive definiteness of $\Ctau^{-1}$ and $\Dmat\Amat^{-1}\Dmat^T$ we recognize that the spectrum of~$\Ctau^{-1}\Dmat\Amat^{-1}\Dmat^T$ lies in the interval~$[0, \omega]$, leading to 
\begin{align*}
	\rho\big( \Mmat^{-1}\Nmat \big) 
%	&= \rho\left( \Mmat^{-1} \begin{bmatrix} 0 & \Dmat^T \\ 0 & 0 \end{bmatrix} + (1-\gamma) \begin{bmatrix} \Imat & 0 \\ 0 & \Imat \end{bmatrix} \right) \\
	&= \rho\left( \gamma \begin{bmatrix} \Amat^{-1} & 0 \\ -\Ctau^{-1}\Dmat\Amat^{-1} & \Ctau^{-1} \end{bmatrix} \begin{bmatrix} 0 & \Dmat^T \\ 0 & 0 \end{bmatrix} + (1-\gamma) \begin{bmatrix} \Imat & 0 \\ 0 & \Imat \end{bmatrix} \right) \\
	&= \rho \left( \begin{bmatrix} (1-\gamma)\Imat & \gamma\Amat^{-1}\Dmat^T \\ 0 & -\gamma \, \Ctau^{-1}\Dmat\Amat^{-1}\Dmat^T + (1-\gamma)\, \Imat \end{bmatrix} \right)\\
	&= \max \big\{1-\gamma, \rho\big( -\gamma\, \Ctau^{-1}\Dmat\Amat^{-1}\Dmat^T + (1-\gamma)\, \Imat \big) \big\} \\
	&\le \max \{1-\gamma, |-\gamma\omega + (1-\gamma)|, |0 + (1 - \gamma)|\} \\
	&= \max \{1-\gamma, \gamma\omega - (1-\gamma)\}.  
\end{align*}
Note that for $\gamma = 1$, which corresponds to the drained split, we obtain~$\rho(\Mmat^{-1}\Nmat) \le \omega$ and, hence, the condition~$\omega < 1$. For the proposed scheme, on the other hand, we have~$\gamma = 2 / (2 + \omega)$, for which this bound takes its minimum $\rho(\Mmat^{-1}\Nmat) \le \omega / (\omega + 2) < 1$. Hence, we converge to the implicit Euler scheme for~$K \to \infty$. 
\end{remark}
We would like to conclude this section with two particular examples of the proposed time stepping scheme. 
% K=1
For $K=1$, there is no relaxation step and we recover the semi-explicit Euler scheme from Section~\ref{sec:methods:semiexplEuler} or the drain split scheme with a single inner iteration step. 
% K=2
The procedure for the case $K=2$ is summarized in Algorithm~\ref{alg:K2}. 
\begin{algorithm}
	\caption{Time stepping scheme~\eqref{eq:scheme} for $K = 2$}
	\label{alg:K2}
	\begin{algorithmic}
		\State {\bf Input}: (consistent) initial data $u^0\in\R^{n_u}$, $p^0\in\R^{n_p}$, time horizon~$T$, step size $\tau$
		\vspace{0.3em}
		\State $N \gets T/\tau$
		\State $u \gets u^0$
		\State $p \gets p^0$
		\State $n \gets 0$
		\vspace{0.3em}
		\While{$n \le N-1$}
		\State $r \gets \tau g^{n+1} + \Dmat u + \Cmat p$
		\State $u^{n+\frac{1}{2}} \gets \Amat^{-1} (f^{n+1} + \Dmat^Tp)$
		\State $p^{n+\frac{1}{2}} \gets \Ctau^{-1} (r - \Dmat u^{n+\frac{1}{2}})$
		\State $u \gets \Amat^{-1} \big(f^{n+1} + \Dmat^T( \gamma\, p^{n+\frac{1}{2}} + (1 - \gamma)\, p) \big)$
		\State $p \gets \Ctau^{-1} \big(r - \Dmat u\big)$ 
		\State $n \gets n + 1$
		\EndWhile
		%
%		\vspace{0.3em}
%		\State {\bf Output}: sequence of approximations $\{u^n\}$, $\{p^n\}$ 
	\end{algorithmic}
\end{algorithm}
%
%
%=============================================================================
%=========  Convergence Analysis
%=============================================================================
\section{Convergence Analysis}\label{sec:convergence}
In this section, we prove first-order convergence of the semi-explicit time stepping scheme~\eqref{eq:scheme}. For this, we first present some kind of stability result for~$K$ being sufficiently large in terms of the coupling parameter~$\omega$. For the convergence analysis, we make the following assumption on the initial data.
\begin{assumption} 
\label{ass:initialStaticPressure}
We assume that $\Bmat p^0 = g(0) - \Dmat\Amat^{-1}\dot f(0) + \calO(\tau)$.  
\end{assumption}
\begin{remark}
The condition in Assumption~\ref{ass:initialStaticPressure} states that the pressure $p$ satisfies~$\dot p(0)= \calO(\tau)$. Although this does not seem to be a natural assumption at first sight, this condition is reasonable. For the real-word application of brain tissue~\cite{JuCLT20}, which we consider in Section~\ref{sec:numerics:brain} below, this means that the force $g$ should not apply instantaneously. 
Moreover, numerical experiments indicate that this assumption is only technical and not needed in practice. Also in the special case $K = 1$, which corresponds to the semi-explicit Euler scheme~\cite{AltMU21}, no such condition is needed to prove first-order convergence. 
\end{remark}
\begin{proposition}
\label{prop:convergence-splitting}
Let the right-hand sides of the semi-discrete problem~\eqref{eq:semiDiscretePoro} satisfy the smoothness conditions~$f \in C^2([0,T], \R^{n_u})$ and~$g \in C^1([0,T], \R^{n_p})$. Further assume consistent initial data, Assumption~\ref{ass:initialStaticPressure}, and  
\begin{equation}
\label{eq:iterationBound}
	\frac{\omega^K}{(2+\omega)^{K-1}}
	< 1.
\end{equation}
Then the iterates of the scheme~\eqref{eq:scheme} are stable in the sense that 
\begin{subequations}
\begin{align}
	\big\| p^{n+1} - p_{K-1}^{n+1} \big\|_{\Ctau} 
	&\le C\, e^T \tau, \label{eq:convergence-splitting:stability1} \\
	\big\| (p^{n+1} - p_{K-1}^{n+1}) - (p^n - p_{K-1}^n) \big\|_{\Ctau} 
	&\le C\, e^T \tau^2, \label{eq:convergence-splitting:stability2}
\end{align}
\end{subequations}
where $C>0$ is a constant that depends only on the $C^2([0,T], \R^{n_u})$-norm of $f$, the $C^1([0,T], \R^{n_p})$-norm of~$g$, and the coupling parameter $\omega$. 
\end{proposition}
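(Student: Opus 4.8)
The plan is to reduce both estimates to uniform bounds on the discrete first and second time differences of the computed pressure, $P^{n+1} \coloneqq p^{n+1}-p^n$ and $Q^{n+1}\coloneqq P^{n+1}-P^n = p^{n+1}-2p^n+p^{n-1}$, after diagonalizing the inner iteration with respect to the matrix $S \coloneqq \Ctau^{-1}\Dmat\Amat^{-1}\Dmat^T$. By Remark~\ref{rem:discreteOmega}, $S$ is self-adjoint and positive semi-definite in the $\Ctau$-inner product, with spectrum contained in $[0,\omega]$. Writing \eqref{eq:scheme:hat} for the pressure alone gives $\hat p^{n+1}_{k+1} = r^{n+1} - S\,p^{n+1}_k$ with a step-constant vector $r^{n+1}$, so the relaxation \eqref{eq:scheme:relaxation} turns the iteration into $p^{n+1}_{k+1} = \gamma\,r^{n+1} + G\,p^{n+1}_k$ with $G \coloneqq (1-\gamma)\,\Imat - \gamma\,S$, while the final undamped step yields $p^{n+1}_K = r^{n+1} - S\,p^{n+1}_{K-1}$. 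Denoting the implicit Euler fixed point of the step by $p^{n+1}_\ast \coloneqq (\Imat+S)^{-1}r^{n+1}$ and setting $\delta_k \coloneqq p^{n+1}_k - p^{n+1}_\ast$, these identities give $\delta_{K-1} = G^{K-1}\delta_0$ and $\delta_K = -S\,\delta_{K-1}$ with $\delta_0 = p^n - p^{n+1}_\ast$, so that the defect is $p^{n+1}-p^{n+1}_{K-1} = \delta_K-\delta_{K-1} = -(\Imat+S)\,G^{K-1}\delta_0$.

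Since $G$ and $S$ commute, the choice $\gamma = 2/(2+\omega)$ gives $\|G\|_{\Ctau}\le \theta \coloneqq \omega/(2+\omega)$ as in Remark~\ref{rem:matrix-splitting}. The increment satisfies $P^{n+1} = \delta_K-\delta_0 = -(\Imat+S\,G^{K-1})\delta_0$, and because $\|S\,G^{K-1}\|_{\Ctau}\le \omega\,\theta^{K-1} = \omega^K/(2+\omega)^{K-1}<1$ holds \emph{exactly} under assumption~\eqref{eq:iterationBound}, the operator $\Imat+S\,G^{K-1}$ is boundedly invertible. Eliminating $\delta_0$ yields the closed form $p^{n+1}-p^{n+1}_{K-1} = \Phi\,P^{n+1}$ with the fixed $\Ctau$-self-adjoint operator $\Phi \coloneqq (\Imat+S)\,G^{K-1}(\Imat+S\,G^{K-1})^{-1}$, whose norm is bounded by $(1+\omega)\,\theta^{K-1}/(1-\omega\,\theta^{K-1})$. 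Consequently, \eqref{eq:convergence-splitting:stability1} follows from $\|P^{n+1}\|_{\Ctau}\le C\,e^T\tau$, and since $(p^{n+1}-p^{n+1}_{K-1})-(p^n-p^n_{K-1}) = \Phi\,Q^{n+1}$, estimate~\eqref{eq:convergence-splitting:stability2} follows from $\|Q^{n+1}\|_{\Ctau}\le C\,e^T\tau^2$.

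To bound the differences, I read the scheme as an implicit Euler step perturbed by $-\Dmat^T(p^{n+1}-p^{n+1}_{K-1}) = -\Dmat^T\Phi\,P^{n+1}$ in the mechanics equation. Eliminating the displacement increment and using $\Dmat\Amat^{-1}\Dmat^T = \Ctau\,S$, the pressure increment obeys the scalar-diagonalizable recursion $P^{n+1} = \ell(S)^{-1}b^{n+1} - S\,G^{K-1}P^n$, where $b^{n+1} = \Ctau^{-1}\big(\tau\g^{n+1}-\tau\Bmat p^n - \Dmat\Amat^{-1}(\f^{n+1}-\f^n)\big)$ and $\ell(S) = (\Imat+S)(\Imat+S\,G^{K-1})^{-1}$; a short computation using that all factors are functions of $S$ shows that the amplification operator simplifies to exactly $S\,G^{K-1}$. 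The crucial point is that its $\Ctau$-norm is $\le\omega\,\theta^{K-1}<1$, so \eqref{eq:iterationBound} is precisely the contraction condition of this recursion, while $\|\ell(S)^{-1}\|_{\Ctau}\le 1$ is harmless. The data term is first order, $\|b^{n+1}\|_{\Ctau}\le C\,\tau\,(1+\|p^n\|_{\Ctau})$, because $\f\in C^2$ and $\g\in C^1$; combining the contraction with $\|p^n\|_{\Ctau}\le\|p^0\|_{\Ctau}+\sum_{k\le n}\|P^k\|_{\Ctau}$ yields a coupled discrete Gronwall inequality with solution $\|P^{n+1}\|_{\Ctau}\le C\,e^T\tau$. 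The base case $\|P^1\|_{\Ctau}=\calO(\tau)$ is supplied by the consistency condition on the initial data, which proves~\eqref{eq:convergence-splitting:stability1}.

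Differencing the recursion once more gives $Q^{n+1} = \ell(S)^{-1}(b^{n+1}-b^n) - S\,G^{K-1}Q^n$, i.e.\ the same contraction driven now by the second-order source $\|b^{n+1}-b^n\|_{\Ctau}\le C\,e^T\tau^2$; here the bound on the term $\tau\Bmat(p^n-p^{n-1}) = \tau\Bmat P^n$ uses the already established first-order estimate on $\|P^n\|_{\Ctau}$, while $\f\in C^2$ and $\g\in C^1$ control the remaining second differences of the data. A second discrete Gronwall argument then gives $\|Q^{n+1}\|_{\Ctau}\le C\,e^T\tau^2$ and hence~\eqref{eq:convergence-splitting:stability2}. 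I expect the main obstacle to be the base case $Q^2 = (p^2-p^1)-(p^1-p^0)$ of this second recursion: it must be shown to be $\calO(\tau^2)$, and this is exactly where Assumption~\ref{ass:initialStaticPressure} is needed, since $\dot p(0)=\calO(\tau)$ rules out an initial temporal layer in the pressure that would otherwise inflate the discrete second derivative near $t=0$. The remaining work is the bookkeeping of the two Gronwall constants to obtain the stated $e^T$ dependence.
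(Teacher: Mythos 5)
Your proposal is correct, and at its core it is the paper's argument rewritten in different variables: the contraction mechanism, the Gr\"onwall step, and the role of Assumption~\ref{ass:initialStaticPressure} all coincide. The paper propagates the first inner defect $\delta_p^{n+1} = \hat p_1^{n+1}-p^n$, shows $\hat p_{k+1}^{n+1}-p_k^{n+1} = \Smat^k\delta_p^{n+1}$ and $p^{n+1}-p^n = \tilde{\Smat}\,\delta_p^{n+1}$ with $\|\tilde{\Smat}\|_{\Ctau}\le1$, and runs the recursion $\delta_p^{n+1} = \Tmat\Smat^{K-1}\delta_p^n + \text{data}$. In your notation $G=\Smat$ and $-S=\Tmat$, so your amplification operator $-SG^{K-1}$ is exactly the paper's $\Tmat\Smat^{K-1}$, and your $\ell(S)^{-1} = (\Imat+SG^{K-1})(\Imat+S)^{-1}$ is exactly the paper's $\tilde{\Smat}$; hence the contraction factor $\omega^K/(2+\omega)^{K-1}$, the data bounds, and the simultaneous Gr\"onwall bound on $\|p^n\|_{\Ctau}$ are the same. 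What you genuinely do differently is to introduce the inner fixed point $p_\ast^{n+1}=(\Imat+S)^{-1}r^{n+1}$ and to invert $\Imat+SG^{K-1}$, which is possible precisely under \eqref{eq:iterationBound}; this lets you propagate the time increments $P^{n+1}$, $Q^{n+1}$ directly via the closed form $p^{n+1}-p_{K-1}^{n+1}=\Phi P^{n+1}$, so the two stability estimates reduce verbatim to first- and second-difference bounds, and \eqref{eq:iterationBound} is exposed as playing a double role (invertibility and contraction). The paper's bookkeeping is marginally more economical since it never needs that inverse, only $\|\tilde{\Smat}\|_{\Ctau}\le1$. Two points to finish your sketch: the recursion for $P^{n+1}$ holds only for $n\ge1$ (there is no $p_{K-1}^0$; consistency of the initial data gives $P^1=\ell(S)^{-1}b^1$ with no memory term, the analogue of the paper's convention $\delta_p^0=0$), and Assumption~\ref{ass:initialStaticPressure} enters one step earlier than you anticipate: it yields $b^1=\calO(\tau^2)$ and hence $P^1=\calO(\tau^2)$, which is what makes your base case $Q^2=\ell(S)^{-1}(b^2-b^1)-SG^{K-1}P^1$ second order, mirroring the paper's computation $\delta_p^1=\calO(\tau^2)$.
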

\begin{proof}
Consider the differences appearing in the first step of the inner iteration, 
\[ 
	\delta_u^{n+1} 
	\coloneqq \hat{u}^{n+1}_1 - u^n, \qquad 
	\delta_p^{n+1} 
	\coloneqq \hat{p}^{n+1}_1 - p^n.  
\]
We first prove the linear dependence 
\begin{equation} 
\label{eq:convergence-splitting:linear}
	\hat{p}_{k+1}^{n+1} - p^{n+1}_k 
	= \Smat^k\delta_p^{n+1}
\end{equation}
for $0 \le k \le K-1$, where $\Smat = \gamma\, \Tmat + (1 - \gamma)\, \mathbf{I}$ with $\Tmat = -\Ctau^{-1}\Dmat \Amat^{-1}\Dmat^T$ and~$\mathbf{I}$ denoting the identity matrix. For this, observe that by~\eqref{eq:scheme:hat} we get for all $1 \le k \le K-1$,  
\[
	\hat{p}_k^{n+1} 
	= \Tmat p_{k-1}^{n+1} + r^{n+1}, 
\]
where $r^{n+1} = \Ctau^{-1}(\tau g^{n+1} + \Dmat u^n + \Cmat p^n - \Dmat\Amat^{-1} f^{n+1})$. Combined with~\eqref{eq:scheme:relaxation} this shows 
\begin{equation} 
\label{eq:convergence-splitting:linear1}
	p_k^{n+1} 
	= \Smat p_{k-1}^{n+1} + \gamma\, r^{n+1}.
\end{equation}
On the other hand, we observe that 
\begin{align}
%\begin{split} 
	\hat{p}_{k+1}^{n+1}
	&= \Tmat\, \big(\gamma\, \hat{p}_k^{n+1} + (1 - \gamma)\, p_{k-1}^{n+1} \big) + r^{n+1} \notag \\
	&= \gamma\, \Tmat\hat{p}_k^{n+1} + (1 - \gamma) \big(\Tmat p_{k-1}^{n+1} + r^{n+1} \big) + \gamma\, r^{n+1} 
	= \Smat\hat{p}_k^{n+1} + \gamma\, r^{n+1}. \label{eq:convergence-splitting:linear2}
%\end{split}
\end{align}
Taking the difference of \eqref{eq:convergence-splitting:linear2} and~\eqref{eq:convergence-splitting:linear1}, we get 
\[
	\hat{p}_{k+1}^{n+1} - p_k^{n+1}
	= \Smat\big( \hat{p}_k^{n+1} - p_{k-1}^{n+1} \big).
\]
A repeated application of~\eqref{eq:convergence-splitting:linear1} and~\eqref{eq:convergence-splitting:linear2} then yields 
\[
	\hat{p}^{n+1}_{k+1} - p_k^{n+1}
	= \Smat\,\big( \hat{p}_k^{n+1} - p_{k-1}^{n+1} \big)
%	= \Smat^2 ( \hat{p}_{k-1}^{n+1} - p_{k-2}^{n+1} )
	= \dots 
	= \Smat^k \big( \hat{p}_{1}^{n+1} - p_{0}^{n+1} \big)
	= \Smat^k \delta_p^{n+1} 
\]
and hence~\eqref{eq:convergence-splitting:linear}. Next, we study the relation of~$\delta_p^{n+1}$ and~$p^{n+1} - p^n$, leading to 
\begin{align}
	p^{n+1} - p^n
	&= p^{n+1} - p_{K-1}^{n+1} + \sum\nolimits_{k=0}^{K-2} \big( p_{k+1}^{n+1} - p_k^{n+1} \big) \notag \\
	&= \Smat^{K-1}\delta_p^{n+1} + \sum\nolimits_{k=0}^{K-2} \big( \gamma\, \hat{p}_{k+1}^{n+1} + (1 - \gamma)\, p_k^{n+1} - p_k^{n+1} \big) \notag \\
	&= \Smat^{K-1}\delta_p^{n+1} + \gamma \sum\nolimits_{k=0}^{K-2} \big( \hat{p}_{k+1}^{n+1} - p_k^{n+1} \big) \notag \\
	&= \Big(\Smat^{K-1} + \gamma \sum\nolimits_{k=0}^{K-2} \Smat^k\Big)\, \delta_p^{n+1}
	\eqqcolon \tilde \Smat\, \delta_p^{n+1}. \label{eq:convergence-splitting:deltap1}
\end{align}
For the matrix~$\tilde \Smat$ one can show that its norm is bounded by a constant $0 < C_\Smat \le 1$. To see this, we first note that $\|\Smat x\|_{\Ctau} \le 1-\gamma$, cf.~Remark~\ref{rem:matrix-splitting}. 
%$\|\tilde \Smat x\|_{\Ctau} \le \gamma\omega - (1-\gamma) = \frac{2\omega - 2 - \omega + 2}{2+\omega} = \frac{\omega}{2+\omega} = 1-\gamma$		
Then, applying the geometric series, we can estimate 
\begin{align*}
	\|\tilde \Smat x\|_{\Ctau} 
	&\le \Big( (1 - \gamma)^{K-1} + \gamma\, \sum\nolimits_{k=0}^{K-2} (1 - \gamma)^k \Big) \|x\|_{\Ctau} \\
	&= \Big( (1 - \gamma)^{K-1} + \gamma\, \frac{1 - (1 - \gamma)^{K-1}}{1 - (1 - \gamma)} \Big) \|x\|_{\Ctau}
	= \|x\|_{\Ctau}.
\end{align*}
With~\eqref{eq:convergence-splitting:linear} we note by~\eqref{eq:scheme:hat} that the values $\delta_u^{n+1}$ and $\delta_p^{n+1}$ solve for $n\ge1$, 
\begin{subequations}
\label{eq:convergence-splitting:system1}	
\begin{align}
	\Amat\delta_u^{n+1} - \Dmat^T \Smat^{K-1} \delta_p^n 
	&= \f^{n+1} - \f^n, \label{eq:convergence-splitting:system1:a} \\
	\Dmat\delta_u^{n+1} + \Cmat\delta_p^{n+1} + \tau \Bmat\delta_p^{n+1} 
	&= \tau g^{n+1} - \tau \Bmat p^n. \label{eq:convergence-splitting:system1:b}
\end{align}
\end{subequations}
Due to the assumed consistency of the initial data, i.e., $\Amat u^{0} - \Dmat^T p^0 = \f^{0}$, \eqref{eq:convergence-splitting:system1} still holds for~$n=0$ if we set $\delta_p^0 \coloneqq 0$. The right-hand sides can be bounded using the Taylor approximation~$\f^{n+1} - \f^n = \tau\dot{f}(\xi_f^{n+1})$ for some $\xi_f^{n+1} \in [t^n, t^{n+1}]$. Solving the first equation~\eqref{eq:convergence-splitting:system1:a} for $\delta_u^{n+1}$ and inserting it into the second leads to 
\begin{equation} \label{eq:convergence-splitting:step}
	\delta_p^{n+1} 
	= \Tmat\Smat^{K-1}\delta_p^n
	- \tau\, \Ctau^{-1} \Bmat p^n
	- \tau\, \Ctau^{-1} \big[ \Dmat\Amat^{-1} \dot{f}(\xi_f^{n+1}) - g^{n+1} \big].
\end{equation}
It is easy to see that the term in brackets is uniformly bounded by a constant $\Cfg = \Cfg(\omega, \|f\|_{C^1([0,T];\R^{n_u})}, \|g\|_{C^0([0,T];\R^{n_p})})$. For the term containing $p^n$, this is not so obvious. Instead we now show the boundedness of~$p^n$ and~$\delta_p^n$ simultaneously. To estimate the effect of the arising terms in the ${\Ctau}$-norm, we observe that 
%
%\begin{align*}
%	\|\Tmat x\|_{\Ctau} 
%	= \|\Ctau^{1/2}\Tmat\Ctau^{-1/2}\Ctau^{1/2}x\| 
%%	&\le \rho(\Ctau^{1/2}\Tmat\Ctau^{-1/2})\, \|\Ctau^{1/2}x\| \\
%	\le \rho\big( \Ctau^{1/2}\Tmat\Ctau^{-1/2} \big)\, \|x\|_{\Ctau}
%	\le \omega\, \|x\|_{\Ctau}
%\end{align*}
%%
%for any $x \in \R^{n_p}$, since $\Ctau^{1/2}\Tmat\Ctau^{-1/2}$ is symmetric. By the same arguments, we get~$\|\Smat x\|_{\Ctau} \le (1 - \gamma)\, \|x\|_{\Ctau}$. 
%
\begin{align*}
	\|\Tmat x\|_{\Ctau} 
	\le \omega\, \|x\|_{\Ctau}
	\qquad\text{and}\qquad
	\|\Smat x\|_{\Ctau} \le (1 - \gamma)\, \|x\|_{\Ctau}
\end{align*}
for any $x \in \R^{n_p}$, cf.~Remarks~\ref{rem:discreteOmega} and~\ref{rem:matrix-splitting}. Hence, we can estimate 
\begin{align*}
	\|\delta_p^{n+1}\|_{\Ctau}
	&\le \|\Tmat\Smat^{K-1}\delta_p^n\|_{\Ctau} 
	+ \tau\, \|\Ctau^{-1}\Bmat p^n\|_{\Ctau} 
	+ \tau\, \Cfg \\
	&\le \omega\, (1 - \gamma)^{K-1}\|\delta_p^n\|_{\Ctau} + \tau\, C_p \|p^n\|_{\Ctau} + \tau\, \Cfg
\end{align*}
for $C_p \coloneqq C_b/(c_c + \tau c_b)$. Recall that~$\gamma$ is fixed in terms of the coupling parameter, namely $\gamma = 2 / (2 + \omega)$. As such, the assumed bound~\eqref{eq:iterationBound} gives
\[
	\omega\, (1-\gamma)^{K-1}
	= \frac{\omega^K}{(2+\omega)^{K-1}}
	< 1. 
\]
Applying the above inequality iteratively, we thus obtain
\begin{align}
	\|\delta_p^{n+1}\|_{\Ctau}
	&\le \big( \omega\, (1-\gamma)^{K-1} \big)^n\, \|\delta_p^0\|_{\Ctau} 
	+ \tau\sum_{\ell=0}^{n-1} \big(\omega\, (1-\gamma)^{K-1}\big)^\ell \big(C_p\, \|p^{n-\ell}\|_{\Ctau} + \Cfg \big) \notag \\
	&\le \tau\, \frac{1}{1 - \omega\,(1-\gamma)^{K-1}}\, \big(\Cfg + C_p\, \max_{\ell\le n} \|p^\ell\|_{\Ctau} \big), \label{eq:convergence-splitting:deltap2}
\end{align}
where we bounded the sum by the geometric series. We combine this with~\eqref{eq:convergence-splitting:deltap1} to get 
\[
	\|p^{n+1}\|_{\Ctau}
	\le \|p^n\|_{\Ctau} + C_\Smat\, \|\delta_p^{n+1}\|_{\Ctau}
	\le (1 + \tau\, C_\Smat' C_p) \max_{\ell\le n} \|p^\ell\|_{\Ctau} + \tau\, C_\Smat'\Cfg
\]
for $C_\Smat'= \frac{C_\Smat}{1 - \omega\, (1-\gamma)^{K-1}}$. Using the discrete 
Gr\"onwall lemma, we finally get 
\[
	\|p^n\|_{\Ctau} 
	\le e^{t^n C_\Smat' C_p} \big(\|p^0\|_{\Ctau} + t^n\, C_\Smat' \Cfg \big).
\]
In other words, the iterates are bounded in terms of~$T$, $\omega$, the initial data, and the right-hand sides, but independent of~$\tau$. Inserting this estimate into~\eqref{eq:convergence-splitting:deltap2} and using~\eqref{eq:convergence-splitting:linear} shows the first claim \eqref{eq:convergence-splitting:stability1}.

% claim 2
For the second claim, we consider once more~\eqref{eq:convergence-splitting:system1}, leading to 
\begin{subequations}
\begin{align}
	\Amat(\delta_u^{n+1} - \delta_u^n) - \Dmat^T \Smat^{K-1} (\delta_p^n - \delta_p^{n-1}) 
	&= \f^{n+1} - 2\f^n + \f^{n-1}, \label{eq:convergence-splitting:system2:a} \\
	\Dmat(\delta_u^{n+1} - \delta_u^n) + \Cmat(\delta_p^{n+1} - \delta_p^n) + \tau \Bmat(\delta_p^{n+1} - \delta_p^n) 
	&= \tau\, (\g^{n+1} - \g^n) - \tau \Bmat (p^n - p^{n-1}). \label{eq:convergence-splitting:system2:b}
\end{align}
\end{subequations}
Considering~\eqref{eq:convergence-splitting:step} for $n = 0$, we get by $\delta_p^0 = 0$ and Assumption~\ref{ass:initialStaticPressure}, 
\[
	\delta_p^{1} 
	= %\Tmat\Smat^{K-1}\delta_p^n
	\tau\, \Ctau^{-1} \big[ g^{1} - \Dmat\Amat^{-1} \dot{f}(\xi_f^{1}) - \Bmat p^0\big]
%	= \tau\, \Ctau^{-1} \big[ g^{1} - g(0) + \Dmat\Amat^{-1}\dot f(0) - \Dmat\Amat^{-1} \dot{f}(\xi_f^{1}) \big]
	= \tau^2\, \Ctau^{-1} \big[ \dot g({\xi}) - \Dmat\Amat^{-1}\ddot f(\zeta) \big] + \calO(\tau^2)
\]	
for some $\xi, \zeta \in [0, \tau]$. Hence, we have $\delta_p^{1} = \mathcal{O}(\tau^2)$. 

Using a Taylor approximation, we can now replace the right-hand side of \eqref{eq:convergence-splitting:system2:a} by $\tau^2 \ddot{f}(\xi_f^{\prime n+1})$ for some $\xi_f^{\prime n+1} \in [t^{n-1}, t^{n+1}]$. In~\eqref{eq:convergence-splitting:system2:b} we obtain $\tau^2\dot{g}(\xi_g^{n+1}) + \tau \Bmat \tilde \Smat \delta_p^n$ for some $\xi_g^{n+1} \in [t^n, t^{n+1}]$. Inequality~\eqref{eq:convergence-splitting:stability2} then follows using a similar procedure as for obtaining~\eqref{eq:convergence-splitting:deltap2} and applying this particular estimate to bound~$\delta_p^n$. More precisely, we get 
\begin{align*}
	\|\delta_p^{n+1} - \delta_p^n\|_{\Ctau}
%	&\le \big( \omega\, (1-\gamma)^{K-1} \big)^n\, \|\delta_p^1 - \delta_p^0\|_{\Ctau} 
%	+ \tau\sum_{\ell=0}^{n-1} \big(\omega\, (1-\gamma)^{K-1}\big)^\ell \big(C_p\, C_1 + \Cfg \big) \notag \\
	&\le \big( \omega\, (1-\gamma)^{K-1} \big)^n\, \|\delta_p^1\|_{\Ctau} 
	+ \tau^2\, \frac{C}{1 - \omega\,(1-\gamma)^{K-1}}  
	% C = \big(\Cfg + C_p\, C_1 \big)
	% with $\|\delta_p^n\| \le C_1 \tau$.
\end{align*}
with a constant $C$ depending on $\Cfg$ and $C_p$. Finally, \eqref{eq:convergence-splitting:linear} with $k=K-1$ yields
\[
	(p^{n+1} - p_{K-1}^{n+1}) - (p^n - p_{K-1}^n)
	= \Smat^{K-1} \big( \delta_p^{n+1} - \delta_p^{n} \big), 
\]
which directly leads to the assertion. 
\end{proof}
We now present the main result of this paper, namely the convergence proof for the proposed scheme~\eqref{eq:scheme}. For this, we combine ideas used in \cite[Lem.~3.2]{AltMU21} for the last step of the inner iteration with the previous result obtained in Proposition~\ref{prop:convergence-splitting}.
\begin{theorem}[First-order convergence]
\label{thm:general-convergence} 
Let the solution $(u, p)$ of the semi-discrete problem~\eqref{eq:semiDiscretePoro} satisfy the smoothness conditions~$u \in C^2([0,T], \R^{n_u})$ and~$p \in C^2([0,T], \R^{n_p})$. Moreover, consider the assumptions of Proposition~\ref{prop:convergence-splitting} including~\eqref{eq:iterationBound}, i.e., $\omega^K / (2+\omega)^{K-1} < 1$. Then scheme~\eqref{eq:scheme} converges with order one. More precisely, we have 
\begin{align*}
	\|u^n - u(t^n)\|_\Amat^2 + \|p^n - p(t^n)\|_\Bmat^2 
	\le C\, \tau^2
\end{align*}
with a positive constant~$C>0$ depending on the solution, the right-hand sides, the time horizon, and the material parameters. 
\end{theorem}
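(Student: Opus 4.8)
The plan is to read the final step of the inner iteration as a perturbed implicit Euler step and to run the differential--algebraic error analysis of \cite[Lem.~3.2]{AltMU21}, where the role formerly played by the explicit time lag $p^{n+1}-p^n$ is now taken over by the inner-iteration lag $\eta^{n+1} \coloneqq p^{n+1}-p_{K-1}^{n+1}$ controlled in Proposition~\ref{prop:convergence-splitting}. Evaluating~\eqref{eq:scheme:hat} for $k=K-1$ and invoking~\eqref{eq:scheme:final} shows that the numerical solution solves
\begin{align*}
	\Amat u^{n+1} - \Dmat^T p^{n+1} &= \f^{n+1} - \Dmat^T\eta^{n+1}, \\
	\Dmat(u^{n+1}-u^n) + \Cmat(p^{n+1}-p^n) + \tau\Bmat p^{n+1} &= \tau\g^{n+1},
\end{align*}
which is exactly the implicit Euler system~\eqref{eq:implEuler} up to the perturbation $-\Dmat^T\eta^{n+1}$ in the elliptic equation.

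First I would introduce the errors $e_u^n \coloneqq u^n - u(t^n)$ and $e_p^n \coloneqq p^n - p(t^n)$. The exact solution satisfies the first equation pointwise in time (it carries no time derivative, hence no consistency defect), while replacing the time derivatives in the second equation of~\eqref{eq:semiDiscretePoro} by backward differences and Taylor expanding produces a defect $\tau\,\vartheta^{n+1}$ with $\|\vartheta^{n+1}\| = \calO(\tau)$, using $u,p\in C^2$. Subtracting the two exact equations from the displayed scheme equations and eliminating the displacement error through the purely algebraic relation $e_u^{n+1} = \Amat^{-1}\Dmat^T(e_p^{n+1}-\eta^{n+1})$ collapses the system onto the pressure error and the Schur complement $\Cmat+\Dmat\Amat^{-1}\Dmat^T$:
\begin{equation*}
	\big(\Cmat + \Dmat\Amat^{-1}\Dmat^T\big)(e_p^{n+1}-e_p^n) + \tau\Bmat e_p^{n+1}
	= -\tau\,\vartheta^{n+1} + \Dmat\Amat^{-1}\Dmat^T(\eta^{n+1}-\eta^n).
\end{equation*}
By consistency of the initial data we have $e_u^0 = e_p^0 = 0$.

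The decisive point is that the perturbation enters only through the \emph{increment} $\eta^{n+1}-\eta^n = (p^{n+1}-p_{K-1}^{n+1})-(p^n-p_{K-1}^n)$, which is bounded by $C\,e^T\tau^2$ in the $\Ctau$-norm by~\eqref{eq:convergence-splitting:stability2}; combined with $\|x\|_{\Dmat\Amat^{-1}\Dmat^T}^2 \le \omega\,\|x\|_\Cmat^2$ from Remark~\ref{rem:discreteOmega}, this keeps the entire right-hand side at order $\tau^2$. I would then test the collapsed equation with the increment $e_p^{n+1}-e_p^n$: the polarization identity telescopes the $\tau\Bmat$-dissipation into $\tfrac{\tau}{2}\|e_p^{n}\|_\Bmat^2$ and produces $\sum_k\|e_p^{k+1}-e_p^k\|_{\Cmat+\Dmat\Amat^{-1}\Dmat^T}^2$, while the right-hand side is absorbed by Young's inequality. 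Since each right-hand side is of order $\tau^2$, summing the squared bounds over the $N=T/\tau$ steps yields a total of order $\tau^3$; dividing by the prefactor $\tau$ gives $\|e_p^{n}\|_\Bmat^2 \le C\tau^2$ directly, and a Cauchy--Schwarz estimate on the telescoped increments turns the increment sum into $\|e_p^{n}\|_{\Cmat+\Dmat\Amat^{-1}\Dmat^T}^2 \le C\tau^2$. Notably, no further Gr\"onwall step is needed here, since the right-hand side is controlled a priori and the exponential-in-$T$ growth is already confined to Proposition~\ref{prop:convergence-splitting}. The displacement error then follows from $\|e_u^{n}\|_\Amat^2 = \|e_p^{n}-\eta^{n}\|_{\Dmat\Amat^{-1}\Dmat^T}^2 \le 2\omega\big(\|e_p^{n}\|_\Cmat^2 + \|\eta^{n}\|_\Cmat^2\big)$, both terms being $\calO(\tau^2)$ by the pressure estimate and the magnitude bound~\eqref{eq:convergence-splitting:stability1}.

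The main obstacle is conceptual rather than computational: one must recognize that the inner-iteration lag $\eta^{n+1}$ assumes the role of the explicit lag in the semi-explicit analysis, and that first-order accuracy hinges not on the size of $\eta^{n+1}$ (which is merely $\calO(\tau)$) but on the smallness of its increment $\eta^{n+1}-\eta^n$. This is exactly what the second-order bound~\eqref{eq:convergence-splitting:stability2} delivers, and it is here that Assumption~\ref{ass:initialStaticPressure} is indispensable, since it guarantees $\eta^1 = \Smat^{K-1}\delta_p^1 = \calO(\tau^2)$ and thereby starts the increment estimate at the correct order. The remaining work -- the norm conversions between the $\Amat$-, $\Cmat$-, $\Bmat$-, and Schur-complement norms via $\omega$, and the routine telescoping -- is standard.
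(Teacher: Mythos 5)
Your proposal is correct, and it leans on Proposition~\ref{prop:convergence-splitting} in exactly the way the paper does -- the increment bound~\eqref{eq:convergence-splitting:stability2} is the decisive input for first-order accuracy, while the magnitude bound~\eqref{eq:convergence-splitting:stability1} is only needed once at the end -- but your route through the error analysis is genuinely different. The paper keeps the coupled error system in both variables and performs two energy testings: first it differences the elliptic error equation in time and tests with $(\eta_u^{n+1}-\eta_u^n,\,\eta_p^{n+1}-\eta_p^n)$, so that the cross terms cancel and one obtains the $\Bmat$-norm bound on the pressure error together with control of $\tfrac1\tau\sum_j\|\eta_u^{j+1}-\eta_u^j\|_\Amat^2$; then it tests again with $(\eta_u^{n+1}-\eta_u^n,\,\eta_p^{n+1})$ and reuses the single-step estimate to bound $\|\eta_u^n\|_\Amat^2+\|\eta_p^n\|_\Cmat^2$. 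You instead eliminate the displacement error algebraically, collapse everything onto the Schur complement $\Cmat+\Dmat\Amat^{-1}\Dmat^T$, perform a single testing with the pressure increment, and then recover $\|e_p^n\|_\Cmat$ from the summed squared increments by Cauchy--Schwarz (paying a factor $\sqrt{N}=\sqrt{T/\tau}$, which precisely converts the order-$\tau^3$ increment sum into an order-$\tau^2$ bound), after which the displacement error follows purely algebraically from $e_u^n=\Amat^{-1}\Dmat^T(e_p^n-\eta^n)$ together with $\|x\|_{\Dmat\Amat^{-1}\Dmat^T}\le\sqrt{\omega}\,\|x\|_\Cmat$ and~\eqref{eq:convergence-splitting:stability1}. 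Your version is more compact (one testing instead of two) and makes the mechanism -- implicit Euler perturbed only through the \emph{increment} of the iteration lag -- very transparent; the cost is a constant quadratic rather than linear in $T$ (immaterial next to the $e^{2T}$ from the proposition) and a slightly less direct treatment of the displacement, which the paper bounds within the same energy framework. One small point you should make explicit: the collapsed recursion at $n=0$ requires the convention $\eta^0\coloneqq p^0-p_{K-1}^0=0$, justified by the consistency of the initial data (the same convention the paper adopts via $\delta_p^0\coloneqq 0$); with that, both the recursion and the use of~\eqref{eq:convergence-splitting:stability2} at the first step are legitimate.
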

\begin{proof}
We denote the errors between the semi-discrete solution and the outcome of~\eqref{eq:scheme} by 
\[ 
	\eta_u^{n} \coloneqq u^{n} - u(t^{n}), \qquad
	\eta_p^{n} \coloneqq p^{n} - p(t^{n}).  
\]
Now let $1 \le n \le N$. Using the Taylor approximations 
\begin{align*}
	u(t^n) 
	= u(t^{n+1}) - \tau\dot{u}(t^{n+1}) + \tfrac{1}{2}\, \tau^2 \ddot{u}(\xi_u^{n+1}),\quad 
	p(t^n) 
	= p(t^{n+1}) - \tau\dot{p}(t^{n+1}) + \tfrac{1}{2}\, \tau^2 \ddot{p}(\xi_p^{n+1}) 
\end{align*}
for some $\xi_u^{n+1}, \xi_p^{n+1} \in [t^n, t^{n+1}]$, we obtain from equations~\eqref{eq:semiDiscretePoro} and~\eqref{eq:scheme:hat} for $k = K-1$ that 
\begin{subequations} \label{eq:general-convergence:system}
\begin{align}
	\Amat\eta_u^{n+1} - \Dmat^T\eta_p^{n+1} 
	&= \Dmat^T(p_{K-1}^{n+1} - p^{n+1}), \\
	\Dmat(\eta_u^{n+1} - \eta_u^n) + \Cmat(\eta_p^{n+1} - \eta_p^n) + \tau \Bmat\eta_p^{n+1} 
	&= \tfrac{1}{2}\, \tau^2 \big[\Dmat\ddot{u}(\xi_u^{n+1}) + \Cmat\ddot{p}(\xi_p^{n+1}) \big].
\end{align}
\end{subequations}
Taking the difference with the previous time step in the first equation results in
\[ 
	\Amat(\eta_u^{n+1} - \eta_u^n) - \Dmat^T(\eta_p^{n+1} - \eta_p^n) 
	= \Dmat^T \big(p_{K-1}^{n+1} - p^{n+1} - (p_{K-1}^n - p^n) \big). 
\]
Now, multiplying the latter two equations from the left with $(\eta_u^{n+1} - \eta_u^n)^T$ and $(\eta_p^{n+1} - \eta_p^n)^T$, respectively, we obtain for their sum 
\begin{align*}
	&\|\eta_u^{n+1} - \eta_u^n\|_\Amat^2 + \|\eta_p^{n+1} - \eta_p^n\|_\Cmat^2 + \tfrac{1}{2}\, \tau\, \big(\|\eta_p^{n+1}\|_\Bmat^2 - \|\eta_p^n\|_\Bmat^2\big) \\
	\le\ &\|\eta_u^{n+1} - \eta_u^n\|_\Amat^2 + \|\eta_p^{n+1} - \eta_p^n\|_\Cmat^2 + \tau\,(\eta_p^{n+1} - \eta_p^n)^T \Bmat\eta_p^{n+1} \\
	\le\ &\big(p_{K-1}^{n+1} - p^{n+1} - (p_{K-1}^n - p^n)\big)^T\Dmat(\eta_u^{n+1} - \eta_u^n)\\
	&\hspace{7em} - \tfrac{1}{2}\, \tau^2 (\eta_p^{n+1} - \eta_p^n)^T \big[\Dmat\ddot{u}(\xi_u^{n+1}) + \Cmat\ddot{p}(\xi_p^{n+1})\big].
\end{align*}
Here, we have used the identity
\begin{equation} \label{eq:general-convergence:identity}
	2\, (\eta_p^{n+1} - \eta_p^n)^T \Bmat\eta_p^{n+1} 
	= \|\eta_p^{n+1}\|_\Bmat^2 - \|\eta_p^n\|_\Bmat^2 + \|\eta_p^{n+1} - \eta_p^n\|_\Bmat^2.
\end{equation}
Using Remark~\ref{rem:discreteOmega}, Proposition~\ref{prop:convergence-splitting}, and Young's inequality, the first summand of the right-hand side can be bounded by 
\begin{align*}
	\big(p_{K-1}^{n+1} - p^{n+1}& - (p_{K-1}^n - p^n)\big)^T\Dmat(\eta_u^{n+1} - \eta_u^n) \\
	&\qquad\le \sqrt{\omega}\, \|p^{n+1} - p^{n+1}_{K-1} - (p^n - p^n_{K-1})\|_\Cmat \, \|\eta_u^{n+1} - \eta_u^n\|_\Amat \\ 
%	&\le \sqrt{\omega}\, C_p\, \tau^2 \|\eta_u^{n+1} - \eta_u^n\|_\Amat \\ 
	&\qquad\le \tfrac12\, \omega\, \tau^4\, C_\text{Prop}^2 e^{2T} + \tfrac12\, \|\eta_u^{n+1} - \eta_u^n\|_\Amat^2 
\end{align*}
with $C_\text{Prop}$ being the constant from~\eqref{eq:convergence-splitting:stability2}. Similarly, we obtain for the second summand  
\begin{align*}
	\tfrac{1}{2}\, \tau^2 (\eta_p^{n+1} - \eta_p^n)^T &\big[\Dmat\ddot{u}(\xi_u^{n+1}) + \Cmat\ddot{p}(\xi_p^{n+1}) \big] \\
	&\qquad\le \tfrac{1}{2}\, \tau^2 \|\eta_p^{n+1} - \eta_p^n\|_\Cmat
	\big( \sqrt{\omega}\, \|\ddot{u}(\xi_u^{n+1})\|_\Amat + \| \ddot{p}(\xi_p^{n+1})\|_\Cmat  \big)  \\ 
	&\qquad\le \tfrac18\, \tau^4 \big( \omega\, \|\ddot{u}\|^2_{L^\infty(v)} + \|\ddot{p}\|_{L^\infty(\Cmat)}^2 \big) + \|\eta_p^{n+1} - \eta_p^n\|_\Cmat^2.
\end{align*}
Absorbing both terms~$\tfrac12\, \|\eta_u^{n+1} - \eta_u^n\|_\Amat^2$ and $\|\eta_p^{n+1} - \eta_p^n\|_\Cmat^2$ and dividing by $\tau/2$, we obtain 
\begin{equation} \label{eq:general-convergence:single-step}
	\|\eta_p^{n+1}\|_\Bmat^2 - \|\eta_p^n\|_\Bmat^2 + \tfrac1\tau\, \|\eta_u^{n+1} - \eta_u^n\|_\Amat^2
	\le \omega\, \tau^3 C_\text{Prop}^2 e^{2T}  
	+ \tfrac14\, \tau^3 \big( \omega\, \|\ddot{u}\|^2_{L^\infty(\Amat)} + \|\ddot{p}\|_{L^\infty(\Cmat)}^2 \big).
\end{equation}
Summation over $n$ then gives 
\begin{align*}
	\|\eta_p^n\|_\Bmat^2 + \frac1\tau\, \sum_{j=0}^{n-1} \|\eta_u^{j+1} - \eta_u^j\|_\Amat^2
	\le T\, \tau^2\, \Big( \omega\, C_\text{Prop}^2 e^{2T} + \tfrac14\, \omega\, \|\ddot{u}\|^2_{L^\infty(\Amat)} + \tfrac14\, \|\ddot{p}\|_{L^\infty(\Cmat)}^2 \Big)
\end{align*}
showing the claimed bound of~$\eta_p^n$.

For the bound of $\eta_u^n$ we consider again system~\eqref{eq:general-convergence:system}. This time, however, using~$\eta_u^{n+1} - \eta_u^n$ and~$\eta_p^{n+1}$ as test functions, respectively. The sum then reads
\begin{align*}
	&(\eta_u^{n+1} - \eta_u^n)^T\Amat\eta_u^{n+1} +
	(\eta_p^{n+1})^T\Cmat(\eta_p^{n+1} - \eta_p^n) + \tau\, \|\eta_p^{n+1}\|_\Bmat^2 \\
	&\qquad= (\eta_u^{n+1} - \eta_u^n)^T\Dmat^T(p_{K-1}^{n+1} - p^{n+1})
	+ \tfrac{1}{2}\, \tau^2 (\eta_p^{n+1})^T\big[\Dmat\ddot{u}(\xi_u^{n+1}) + \Cmat\ddot{p}(\xi_p^{n+1}) \big]. 
\end{align*}
By the identity~\eqref{eq:general-convergence:identity}, this can be transformed to  
\begin{align*}
	&\|\eta_u^{n+1}\|_\Amat^2 - \|\eta_u^n\|_\Amat^2 + \|\eta_u^{n+1} - \eta_u^n\|_\Amat^2 +
	\|\eta_p^{n+1}\|_\Cmat^2 - \|\eta_p^n\|_\Cmat^2 + \|\eta_p^{n+1} - \eta_p^n\|_\Cmat^2 +
	2\tau\, \|\eta_p^{n+1}\|_\Bmat^2 \\
	&\qquad= 2\, (\eta_u^{n+1} - \eta_u^n)^T\Dmat^T(p_{K-1}^{n+1} - p^{n+1})
	+ \tau^2 (\eta_p^{n+1})^T\big[\Dmat\ddot{u}(\xi_u^{n+1}) + \Cmat\ddot{p}(\xi_p^{n+1}) \big].
\end{align*}
Using once more Young's inequality, we bound the right-hand side by 
\begin{align*}
	\tfrac1\tau\, \|\eta_u^{n+1} - \eta_u^n\|_\Amat^2 + \omega\, \tau\, \|p_{K-1}^{n+1} - p^{n+1}\|_\Cmat^2
	+ \tfrac14 C_\text{em} \tau^3 \big( \omega\, \|\ddot{u}\|^2_{L^\infty(\Amat)} + \|\ddot{p}\|_{L^\infty(\Cmat)}^2 \big) 
	+ 2\tau\, \|\eta_p^{n+1}\|_\Bmat^2, 
\end{align*}
where $C_\text{em} = C_c C^2_{\V\hook\cHV} / c_b$ includes the continuity constant of the embedding~$\V\hook\cHV$. Applying \eqref{eq:general-convergence:single-step} and inequality~\eqref{eq:convergence-splitting:stability1} from Proposition~\ref{prop:convergence-splitting}, this is bounded by 
\begin{align*}
	\tau^3 \left[
	2\omega\, C_\text{Prop}^2\, e^{2T} 
	+ \tfrac14\, (1+C_\text{em}) \big( \omega\, \|\ddot{u}\|^2_{L^\infty(\Amat)} + \|\ddot{p}\|_{L^\infty(\Cmat)}^2 \big)
	\right] 
	+ 2\tau\, \|\eta_p^{n+1}\|_\Bmat^2.
\end{align*}
By absorbing $2\tau\, \|\eta_p^{n+1}\|_\Bmat^2$ and dropping the terms $\|\eta_u^{n+1} - \eta_u^n\|_\Amat^2$ and~$\|\eta_p^{n+1} - \eta_p^n\|_\Cmat^2$ on the left-hand side, the full inequality now reads
\begin{align*}
	\|\eta_u^{n+1}\|_\Amat^2 - \|\eta_u^n&\|_\Amat^2 +
	\|\eta_p^{n+1}\|_\Cmat^2 - \|\eta_p^n\|_\Cmat^2 \\
	&\qquad\le
	\tau^3 \left[
	2\omega\, C_\text{Prop}^2\, e^{2T} 
	+ \tfrac14\, (1+C_\text{em}) \big( \omega\, \|\ddot{u}\|^2_{L^\infty(\Amat)} + \|\ddot{p}\|_{L^\infty(\Cmat)}^2 \big)
	\right].
\end{align*}
Finally, summing over $n$ yields 
\[
	\|\eta_u^n\|_\Amat^2 + \|\eta_p^n\|_\Cmat^2 
	\le 
	\tau^2\, T\, \left[
	2\omega\, C_\text{Prop}^2\, e^{2T} 
	+ \tfrac14\, (1+C_\text{em}) \big( \omega\, \|\ddot{u}\|^2_{L^\infty(\Amat)} + \|\ddot{p}\|_{L^\infty(\Cmat)}^2 \big)
	\right],
\]
which completes the proof.
\end{proof}
\begin{remark}
The iteration bound~\eqref{eq:iterationBound} can be solved for $K$, leading to the requirement
\[ 
	K 
	> K(\omega)
	\coloneqq 1 + \frac{\log(\omega)}{\log(\omega + 2) - \log(\omega)}. 
\]
Since this is well-defined for all positive~$\omega$, we see that stability and first-order convergence can be achieved for all choices of model parameters with an appropriately chosen number of inner iteration steps. A corresponding list of~$K$ and $\omega$ values is given in Table~\ref{tab:iterationBound}, see also Figure~\ref{fig:sharpnessComparison} below.
\end{remark}
\begin{table}
	\caption{Relation of the coupling parameter~$\omega$ and needed number of inner iterations~$K$ according to the bound~\eqref{eq:iterationBound}. }
	\begin{center}
		\begin{tabular}{c|c c c c c c c c c c}
			$K$ & 1 & 2 & 3 & 4 & 5 & 6 & 7 & 8 & 9 & 10 \\ 
			\hline
			$\omega <$ & 1.00 & 2.00 & 2.87 & 3.67 & 4.43 & 5.15 & 5.84 & 6.51 & 7.16 & 7.80
		\end{tabular}
	\end{center}
	\label{tab:iterationBound}
\end{table}
\begin{remark}
Following the same proof without Assumption~\ref{ass:initialStaticPressure}, one can still recover a convergence rate of order $\tau^{3/4}$. In numerical experiments, however, such a reduction of the order cannot be observed. 
%
% Im Beweis taucht nur Summe der delta Diff auf.
%Die Summe der p-Differenzen die in 4.13 vorkommen würden (dort ist das schon abgeschätzt durch C_Prop) wäre nach dem Remark ohne extra Annahmen von Ordnung \tau^2. Im Schritt davor wird noch durch \tau geteilt, also erhält mann in der Formel nach 4.13 auf der rechten Seite Ordnung \tau. Da auf der linken Seite die Quadrate der Fehler stehen würde das Konvergenz mit \tau^0.5 entsprechen.
%
%Weiter unten wird das dann nochmal verwendet, hier kann man aber indem man die Gewichtung bei der Young Ungleichung anders macht auf \tau^1.5 für die Fehlerquadrate, also insgesamt \tau^0.75 kommen.
%		
%\MD{
%%			\[
%%			\sum_{i = 0}^n \|\delta_p^{n+1} - \delta_p^n\|_{\Ctau} \le
%%			\frac{1}{1 - \omega\,(1-\gamma)^{K-1}}\, \|\delta_p^1\|_{\Ctau} + \tau t^n\frac{1}{1 - \omega\,(1-\gamma)^{K-1}}\, \big(\Cfg + C_p\, C_1 \big)
%%			\]
%	\[
%	\sum_{i = 0}^n \|\delta_p^{n+1} - \delta_p^n\|_{\Ctau}^2 \le
%	\frac{1}{1 - \omega^2\,(1-\gamma)^{2K-2}}\, \|\delta_p^1\|_{\Ctau}^2 + \tau^3 t^n\left(\frac{1}{1 - \omega\,(1-\gamma)^{K-1}}\right)^2\, \big(\Cfg + C_p\, C_1 \big)^2
%	\]
%	%
%	By the latter formula and the following proof only $\|\delta_p^1\|_{\Ctau} \in \mathcal{O}(\tau^{1.5})$ is required
%}
\end{remark}
%
%
%=============================================================================
%=========  Numerics
%=============================================================================
\section{Numerical Examples}\label{sec:numerics}
This section is devoted to a numerical investigation of the presented time stepping scheme. In a first experiment, we consider a toy problem in order to study the proven lower bound on the number of inner iterations. The second experiment then considers a real-world example, namely the simulation of brain tissue. Finally, we present a runtime comparison proving the competitiveness of the newly introduced scheme. 
%
%
%=============================================================================
\subsection{Sharpness of the iteration bound}\label{sec:numerics:sharpness}
We numerically analyze the sharpness of the iteration bound \eqref{eq:iterationBound}, which guarantees linear convergence. For this, we apply our scheme to the model problem from~\cite{AltMU22}, i.e., to~\eqref{eq:poro:weak} with bilinear forms 
%
%\begin{subequations}
%\label{eq:modelProblem}
\begin{align*}
	a(u, v) = v^T\Amat u, \qquad
	b(p, q) = q^TBp, \qquad 
	c(p, q) = q^T\Cmat p, \qquad
	d(v, p) = \sqrt{\tilde \omega}\, p^T\Dmat v
\end{align*}
with 
\begin{align*}
	\Amat = \tfrac{1}{2 - \sqrt{2}} 
	\begin{bmatrix} 2 & -1 & 0 \\ -1 & 2 & -1 \\ 0 & -1 & 2 \end{bmatrix}, \qquad
	\Bmat = 1, \qquad
	\Cmat = 1, \qquad
	\Dmat = \tfrac{1}{3}\, \big[\ 2\ \ 1\ \ 2\ \big]. %\begin{bmatrix} 2 & 1 & 2 \end{bmatrix}. 
\end{align*}
%\end{subequations}
%
Note that $\Amat$ and $\Cmat$ are chosen such that their smallest eigenvalues equal~$1$, respectively, and that~$\Dmat$ is chosen such that the operator norm of $d$ is~$\sqrt{\tilde \omega}$. As a result, the coupling parameter introduced in Section~\ref{sec:prelim:omega} (as we are not considering poroelasticity here) is indeed given by~$\tilde \omega$. For the right-hand sides, we set
\[ 
	f(t) = \big[\ 1\ \  1\ \  1\ \big]^T, \qquad 
	g(t) = \sin(t).
\]
Further, we consider consistent initial data with $p(0)=1$. Note that this means that Assumption~\ref{ass:initialStaticPressure} is not satisfied. 
% $\Bmat p^0 = 1 \neq 0 = g(0) - \Dmat\Amat^{-1}\dot f(0) 
For the simulation, we use $T=1$ and a fixed time step size~$\tau = 1/300$. % $N = 300$.
We run the scheme with $K$ ranging from $1$ to $5$ and compare the approximations with the implicit Euler solution acting as reference solution. The results are shown in Figure~\ref{fig:modelProblem}. Therein, it can be clearly observed that the scheme turns unstable if the coupling parameter~$\tilde \omega$ gets too large. 
\begin{figure}
	% matlab code: experiments/modelProblem.m
	% This file was created by matlab2tikz.
%
%The latest updates can be retrieved from
%  http://www.mathworks.com/matlabcentral/fileexchange/22022-matlab2tikz-matlab2tikz
%where you can also make suggestions and rate matlab2tikz.
%
%
\begin{tikzpicture}

\begin{axis}[%
width=0.7\textwidth,
height=0.3\textwidth,
scale only axis,
unbounded coords=jump,
xmin=0.8,
xmax=11,
xlabel style={font=\color{white!15!black}},
xlabel={coupling parameter~$\tilde \omega$},
ymode=log,
ymin=1e-06,
ymax=5,
yminorticks=true,
ylabel style={font=\color{white!15!black}},
ylabel={relative error},
axis background/.style={fill=white},
legend columns = 5,
legend style={at={(1.02,1.02)}, anchor=south east, legend cell align=left, align=left, draw=white!15!black}
]

\addplot [color=mycolor1, very thick]
  table[row sep=crcr]{%
0.8	0.000170839891311137\\
1.02857142857143	0.000170708410441701\\
1.25714285714286	76471.2042347891\\
1.48571428571429	4.06315127580839e+26\\
1.71428571428571	1.64364613045475e+45\\
};
\addlegendentry{$K=1$\quad}

\addplot [color=mycolor2, very thick, dashed]
  table[row sep=crcr]{%
0.8	6.46403646388838e-05\\
1.02857142857143	9.31630554121448e-05\\
1.25714285714286	0.000128106363184143\\
1.48571428571429	0.000172342811429901\\
1.71428571428571	0.000231145515297873\\
1.94285714285714	0.000314959837455314\\
2.17142857142857	0.0004473616155515\\
2.4	0.000694654850680127\\
2.62857142857143	0.00134253111096417\\
2.85714285714286	0.00852925950970248\\
3.08571428571429	576729808.479717\\
3.31428571428571	2.04544205981667e+21\\
};
\addlegendentry{$K=2$\quad}

\addplot [color=mycolor3, very thick, dotted]
  table[row sep=crcr]{%
0.8	1.07272136760963e-05\\
1.02857142857143	1.65524416002697e-05\\
1.25714285714286	2.27162536952146e-05\\
1.48571428571429	2.88282169292283e-05\\
1.71428571428571	3.46082057598755e-05\\
1.94285714285714	3.98799654686038e-05\\
2.17142857142857	4.4551370264254e-05\\
2.4	4.85918699661295e-05\\
2.62857142857143	5.20123580130314e-05\\
2.85714285714286	5.48492738003801e-05\\
3.08571428571429	5.71529971050258e-05\\
3.31428571428571	5.89798831653135e-05\\
3.54285714285714	6.0387092191421e-05\\
3.77142857142857	6.14294479906112e-05\\
4	6.21576959731684e-05\\
4.22857142857143	6.26176844585822e-05\\
4.45714285714286	6.28501399321648e-05\\
4.68571428571429	6.28907935815553e-05\\
4.91428571428571	6.01921649852307e-05\\
5.14285714285714	16994.9472791392\\
5.37142857142857	10311400650619.9\\
5.6	2.17887037082638e+21\\
};
\addlegendentry{$K=3$\quad}

\addplot [color=mycolor4, very thick, dashdotdotted]
  table[row sep=crcr]{%
0.8	2.17013603852014e-06\\
1.02857142857143	4.08701093251274e-06\\
1.25714285714286	6.59905146675759e-06\\
1.48571428571429	9.64684182798573e-06\\
1.71428571428571	1.31696588719523e-05\\
1.94285714285714	1.71175063293752e-05\\
2.17142857142857	2.14563036049153e-05\\
2.4	2.61694611266267e-05\\
2.62857142857143	3.12579724836966e-05\\
2.85714285714286	3.67402604467667e-05\\
3.08571428571429	4.26524766217181e-05\\
3.31428571428571	4.90496826601899e-05\\
3.54285714285714	5.60082853549801e-05\\
3.77142857142857	6.36301477424023e-05\\
4	7.20490070568775e-05\\
4.22857142857143	8.1440198532336e-05\\
4.45714285714286	9.20353063238263e-05\\
4.68571428571429	0.000104144493358703\\
4.91428571428571	0.000118191263183086\\
5.14285714285714	0.0001347682304563\\
5.37142857142857	0.000154730084705463\\
5.6	0.00017935604147848\\
5.82857142857143	0.000210650776102441\\
6.05714285714286	0.000251944177723023\\
6.28571428571428	0.000309204697037027\\
6.51428571428571	0.000394302325158099\\
6.74285714285714	0.000534711932188834\\
6.97142857142857	0.000811798913728293\\
7.2	0.00162159016451268\\
7.42857142857143	0.0205646287443043\\
7.65714285714286	2888.80776344298\\
7.88571428571429	2164637838.54074\\
8.11428571428571	1.30497234339298e+15\\
};

\addlegendentry{$K=4$\quad}

\addplot [color=mycolor5, very thick, dashdotted]
  table[row sep=crcr]{%
0.8	4.23161411677418e-07\\
1.02857142857143	9.4020709068871e-07\\
1.25714285714286	1.70544993450225e-06\\
1.48571428571429	2.70964382610866e-06\\
1.71428571428571	3.92401739528441e-06\\
1.94285714285714	5.30920886422562e-06\\
2.17142857142857	6.82208372951827e-06\\
2.4	8.42030113286029e-06\\
2.62857142857143	1.00650477776844e-05\\
2.85714285714286	1.17224476815221e-05\\
3.08571428571429	1.33640800798121e-05\\
3.31428571428571	1.4966941168873e-05\\
3.54285714285714	1.65130725521525e-05\\
3.77142857142857	1.79890219536808e-05\\
4	1.93852361739278e-05\\
4.22857142857143	2.06954465729146e-05\\
4.45714285714286	2.19160936702296e-05\\
4.68571428571429	2.30458024041143e-05\\
4.91428571428571	2.40849223042476e-05\\
5.14285714285714	2.50351330435092e-05\\
5.37142857142857	2.58991111491448e-05\\
5.6	2.66802547339368e-05\\
5.82857142857143	2.73824575643801e-05\\
6.05714285714286	2.80099292939814e-05\\
6.28571428571428	2.85670503199455e-05\\
6.51428571428571	2.90582607872015e-05\\
6.74285714285714	2.9487975477738e-05\\
6.97142857142857	2.98605171233758e-05\\
7.2	3.01800717553772e-05\\
7.42857142857143	3.04506548418699e-05\\
7.65714285714286	3.06760891609693e-05\\
7.88571428571429	3.08599917010124e-05\\
8.11428571428571	3.10057675349805e-05\\
8.34285714285714	3.11166083099198e-05\\
8.57142857142857	3.11954945438374e-05\\
8.8	3.12452014264334e-05\\
9.02857142857143	3.12683064223288e-05\\
9.25714285714286	3.12671969178582e-05\\
9.48571428571429	3.12440814899212e-05\\
9.71428571428571	3.12009987556023e-05\\
9.94285714285714	3.11398280758505e-05\\
10.1714285714286	3.10623000456665e-05\\
10.4	3.096755949202e-05\\
10.6285714285714	0.000163244950239044\\
10.8571428571429	5.82962728320501\\
11.0857142857143	141906.227910051\\
11.3142857142857	2631937149.93072\\
11.5428571428571	37682701947620.7\\
11.7714285714286	4.2159719770477e+17\\
};
\addlegendentry{$K=5$}

\end{axis}
\end{tikzpicture}%
	\caption{Relative error (measured in the Euclidean norm) of the proposed scheme applied to the model problem of Section~\ref{sec:numerics:sharpness} in comparison to the implicit Euler solution at final time $T = 1$. }
	% error norm: $(\|\bullet\|_{\R^3}^2 + \|\bullet\|_\R^2)^{1/2}$ 
	\label{fig:modelProblem}
\end{figure}
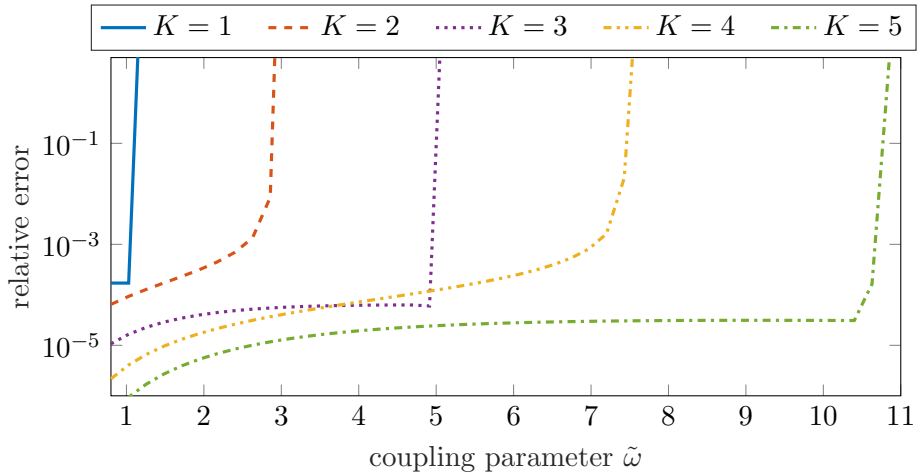

% sharpness
This experiment also shows that the proven bound~\eqref{eq:iterationBound} is quite pessimistic in the sense that the proposed scheme may be used for coupling parameters exceeding this bound. This fact is illustrated in Figure~\ref{fig:sharpnessComparison}. Therein, we plot the function $K(\tilde \omega)$ given by the theoretical bound~\eqref{eq:iterationBound} and the largest value of~$\tilde \omega$ for which the scheme is still convergent for the above model problem. 
We make this more precise for the poroelastic example of shale with coupling parameter~$\omega=4.02$ (see Table~\ref{tab:constants}). Theorem~\ref{thm:general-convergence} guarantees first-order convergence for $K\ge5$ but the model problem indicates that three inner iteration steps are already sufficient. 
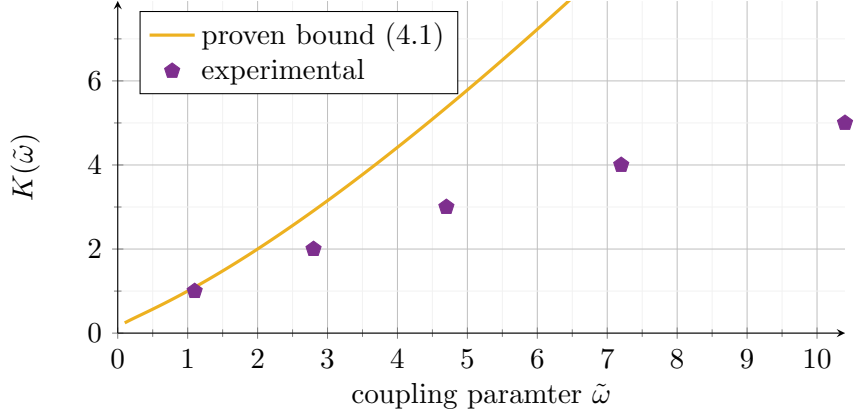
\begin{figure}
	\begin{tikzpicture}
		\begin{axis}[
			width=0.75\textwidth,
			height=0.4\textwidth,
			axis lines = left,
			xlabel = coupling paramter~$\tilde \omega$,
			ylabel = \(K(\tilde \omega)\),
			xmin = 0, xmax=10.4,
			ymin = 0, ymax=7.9,
			grid = both,
		    grid style={line width=.1pt, draw=gray!10},
		    major grid style={line width=.2pt,draw=gray!50},
			minor x tick num=1,
			minor y tick num=1,
			legend pos = north west,
			legend cell align = {left},
		]
			\addplot [
				domain = 0.1:8,
				samples = 100,
				variable = \t,
				color = mycolor4,
				very thick,
			]
			({t}, {1 + ln(t)/(ln((t + 2)/t))});
			% \addplot [
			% 	domain = 0.1:10.5,
			% 	samples = 100,
			% 	variable = \t,
			% 	color = mycolor1,
			% 	dashed,
			% 	very thick,
			% ]
			% ({1 + ln(sqrt(t))/(ln((t + 2)/t))}, {t});
			\addplot [
			mark = pentagon*,
			mark size = 3,
			color = mycolor3,
			only marks,
			] coordinates
			{
				(1.1, 1)
				(2.8, 2)
				(4.7, 3)
				(7.2, 4)
				(10.4, 5)
			};
			\legend{proven bound~\eqref{eq:iterationBound}, experimental}
		\end{axis}
	\end{tikzpicture}
	\caption{Comparison of the proven iteration bound~\eqref{eq:iterationBound} and experimental iteration bounds for the model problem of Section~\ref{sec:numerics:sharpness}.  }
	\label{fig:sharpnessComparison}
\end{figure}
%
%
%=============================================================================
\subsection{Application to brain tissue}\label{sec:numerics:brain}
In this second experiment, we apply the proposed scheme to a more complex example taken from~\cite{JuCLT20}. Therein, the poroelastic model is used to simulate the deformation of brain tissue as a result of brain edema. We would like to confirm our theoretical results, which predict our scheme to be efficient for moderate coupling parameters in the range~$\omega \in [1,10]$. As such, we consider the model parameters as in~\cite[Tab.~10]{JuCLT20} but replace their choice of the Biot modulus~$M$ -- which would lead to $\omega \approx 200$ -- by a lower value. Indeed, a wide range of values for the Biot modulus is used within medical literature, cf.~\cite{EliRT21,JuCLT20,PieRV22} or Table~\ref{tab:constants} where an even smaller value of $M$ is used. To summarize, our model parameters are
\begin{align*}
	\lambda &= 7.8\times10^3 \newton/\meter^2, &
	\mu &= 3.3\times10^3 \newton/\meter^2, &
	\alpha &= 1, \\
	\kappa &= 1.3\times10^{-15} \meter^2, &
	\nu &= 8.9\times10^{-4} \newton\second/\meter^2, &
	M &= 2.2\times10^{4} \newton/\meter^2,
\end{align*}
leading to~$\omega \approx 2.8$. Hence, the proven bound~\eqref{eq:iterationBound} claims~$K = 3$ to be sufficient, which we will verify in the following.

As in~\cite{JuCLT20}, we construct a two-dimensional triangular mesh with $11221$ elements from brain scans obtained from \cite{JohB01}. The model specifies mixed boundary conditions, namely
\begin{subequations} 
\label{eq:brain:bc}
\begin{align}
	u &= 0 && \text{on } \Gamma_1, \\
	\left(\tfrac{\kappa}{\nu}\, \nabla p\right) \cdot n 
	&= c_{\operatorname{SAS}} (p_{\operatorname{SAS}} - p) && \text{on } \Gamma_1, \\
	(\sigma(u) - \alpha p) \cdot n 
	&= -p \cdot n && \text{on } \Gamma_2, \label{eq:brain:neumann-bc} \\
	p 
	&= 1100 \newton/\meter^2 && \text{on } \Gamma_2. \label{eq:brain:dirichlet-bc}
\end{align}
\end{subequations}
Here, $\Gamma_1$ is the outer border of the mesh representing the brain tissue wall and $\Gamma_2$ is the inner border representing the ventricular wall. Moreover, $n$ denotes the outer normal vector, $c_{\operatorname{SAS}} = 5.0\times10^{-10} \meter^3/(\newton\second)$ the conductance, and~$p_{\operatorname{SAS}} = 1070 \newton/\meter^2$ the pressure outside the brain tissue wall. Note that the conditions~\eqref{eq:brain:bc} do not match with the homogeneous Dirichlet boundary conditions we have assumed in the above analysis. Nevertheless, the resulting matrices can be constructed in such a way that their symmetric structure is preserved. As such, all assumptions on the matrices used in our analysis are still fulfilled. 

% initial data
We first compute the neutral state $(u^0, p^0)$ of displacement and pressure as the solution to the linear system consisting of the boundary conditions~\eqref{eq:brain:bc} and 
\begin{subequations} 
\label{eq:brain:static}
\begin{align}
	-\nabla \cdot \sigma(u^0) + \alpha \nabla p^0 
	&= 0 \qquad\quad\text{on } \Omega, \\
	\nabla \cdot \left(\tfrac{\kappa}{\nu}\, \nabla p^0\right) 
	&= 0 \qquad\quad\text{on } \Omega.
\end{align}
\end{subequations}
This then serves as initial data. Subsequently, a non-zero source term $g = 1.5\times10^{-4}\second^{-1}$ is introduced in the damaged part of the brain. The corresponding differential equation is then solved using the scheme~\eqref{eq:scheme} for~$T = 4.2 \hour$, $N = 100$, and~$K = 2$. The initial and final distributions for the pressure and the displacement are shown in Figure~\ref{fig:brain}(A). In the neutral state, there is almost no displacement and~$p$ stays between $1070 \newton/\meter^2$ and $1100 \newton/\meter^2$ as dictated by the boundary conditions. For~$t = T$, however, we see a maximum pressure of around $2000 \newton/\meter^2$ at the injured region and a displacement of up to $0.2 \millimeter$. 

% fixed-stress
We also use this example to compare convergence properties of the proposed and the fixed-stress scheme for fixed numbers of inner iterations. For this, we run both schemes with varying time step sizes~$\tau$, while the implicit Euler solution with $\tau = 1.5 \second$ is used as a reference. As the solution converges to an equilibrium over time, we set~$T = 10 \minute$ and compare the solutions at this point in time. Note that the computational costs for both schemes are comparable for the same number of inner iterations. The computational results can be seen in Figure~\ref{fig:brain}(B). 
As expected, we observe that the proposed scheme is not stable for~$K = 1$. For two and three inner iterations, however, the scheme converges and produces almost the same results. The fixed-stress scheme, on the other hand, is stable for any number of inner iterations, but does not seem to converge towards the reference solution for $\tau \to 0$. Instead, the error reaches a certain plateau, which gets smaller for increasing number of inner iteration steps. This may be caused by the relaxation in the final step, cf.~the discussion in Remark~\ref{rem:final-relaxation}.  
\begin{figure}
	\begin{subfigure}[b]{0.45\textwidth}
		\includegraphics[width=\textwidth]{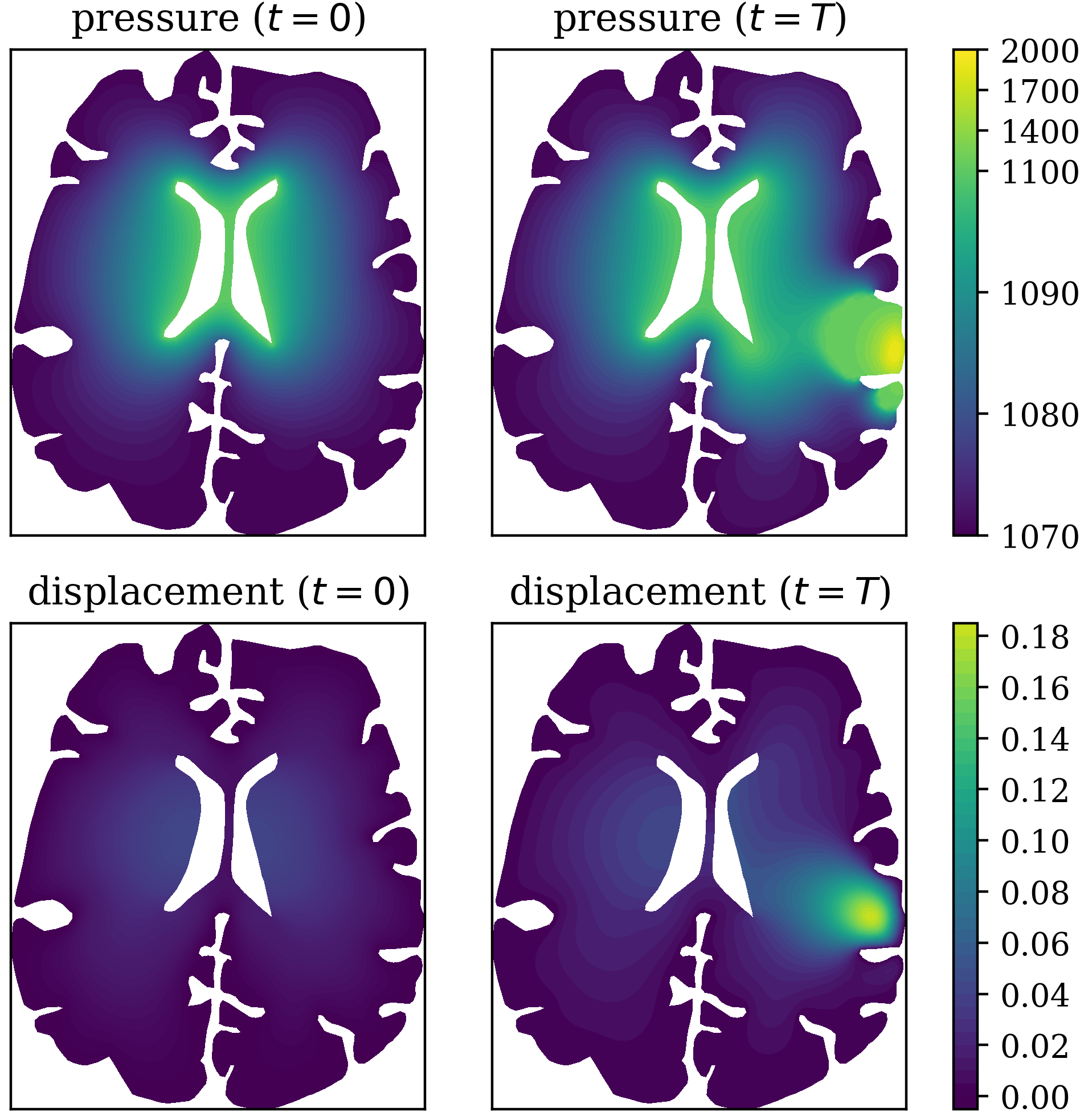}
		\caption{Initial and final state for the pressure (in $\newton/\meter^2$) and the displacement (in $\millimeter$). A non-zero source term is applied in the ``damaged'' subdomain on the right of the brain.
		}
	\label{fig:brain:final}
	\end{subfigure}
	\qquad
	\begin{subfigure}[b]{0.49\textwidth}
		\begin{tikzpicture}
		\begin{axis}[%
		width=0.8\textwidth,
		height=0.6\textwidth,
		at={(0\textwidth,0\textwidth)},
		scale only axis,
		unbounded coords=jump,
		xlabel style={font=\color{white!15!black}},
		xlabel={$\tau$ (in $\second$)},
		xmode=log,
		ymode=log,
		yminorticks=true,
		ylabel style={font=\color{white!15!black}},
		ylabel={relative error},
		axis background/.style={fill=white},
		legend columns = 4,
		legend style={
			at={(1.0,1.15)},
			anchor=east,
			legend cell align=left,
			align=left,
			draw=white!15!black,
			font=\Small
		},
		xmin = 2.7,
		xmax = 62,
		ymin = 0.00002,
		ymax = 0.6,
		mark repeat=3,
		% mark phase=4,
		mark size=3.5,
		]
		\addlegendimage{empty legend}
		\addlegendentry{scheme~\eqref{eq:scheme}}

		\addplot [color = mycolor4, mark=triangle*, very thick] table [col sep=semicolon, x expr = 600/(1+\thisrowno{0}), y index = {1}] {experiments/output/error_tau.csv};
		\addlegendentry{$K = 1$}
		\addplot [color = mycolor4, mark=x, very thick] table [col sep=semicolon, x expr = 600/(1+\thisrowno{0}), y index = {2}] {experiments/output/error_tau.csv};
		\addlegendentry{$K = 2$}
		\addplot [color = mycolor4, mark=o, very thick] table [col sep=semicolon, x expr = 600/(1+\thisrowno{0}), y index = {3}] {experiments/output/error_tau.csv};
		\addlegendentry{$K = 3$}

		\addlegendimage{empty legend}
		\addlegendentry{fixed-stress}

		\addplot [color = mycolor3, dashed, mark=triangle*, thick, every mark/.append style={solid}] table [col sep=semicolon, x expr = 600/(1+\thisrowno{0}), y index = {4}] {experiments/output/error_tau.csv};
		\addlegendentry{$K = 1$}
		\addplot [color = mycolor3, dashed, mark=x, thick, every mark/.append style={solid}] table [col sep=semicolon, x expr = 600/(1+\thisrowno{0}), y index = {5}] {experiments/output/error_tau.csv};
		\addlegendentry{$K = 2$}
		\addplot [color = mycolor3, dashed, mark=o, thick, every mark/.append style={solid}] table [col sep=semicolon, x expr = 600/(1+\thisrowno{0}), y index = {6}] {experiments/output/error_tau.csv};
		\addlegendentry{$K = 3$}
		\end{axis}
		\end{tikzpicture}%
	\subcaption{Relative error of the proposed and the fixed-stress scheme for~$T = 10 \minute$ and varying~$\tau$. The reference solution is computed by the implicit Euler scheme with~$\tau = 1.5 \second$. }
	\label{fig:brain:error-tau}
	\end{subfigure}
	\caption{Numerical results for the brain model of Section~\ref{sec:numerics:brain}.}
	\label{fig:brain}
\end{figure}
\subsection{Implementation details and runtime comparison}
For an efficient overall implementation, one also needs to discuss preconditioners and the linear solvers used for the resulting systems in each time step. 
\subsubsection*{preconditioners}
As already mentioned in the introduction, decoupled approaches have the advantage that we need to solve two smaller subsystems for which well-known preconditioners exist~\cite{LeeMW17}. 
% semi-explicit
Hence, we first consider preconditioners for the linear systems with left-hand side matrices $\Amat$ and $\Ctau$ that come up in semi-explicit methods. Here classical preconditioners for linear elasticity and Darcy flows can be used, respectively. In our experiments, we found that $\Amat$ can be effectively preconditioned using an {\em algebraic multigrid (AMG) preconditioner}~\cite{YanH02}, while we use a {\em Jacobi preconditioner} for $\Ctau$ where the mass matrix is dominating. We would like to emphasize that we do not consider the nearly incompressible case here. For this, one has to take special care of the spatial discretization, cf.~\cite{LeePMR19}.  

% implicit
For a fully implicit discretization, one needs a block preconditioner. Combining the previous two preconditioners, however, does not yield a good preconditioner of the full system. Rather common is a construction based on the Schur complement $\Smat = \Dmat^T \Amat \Dmat + \Ctau^{-1}$, see~\cite{BenGL05}. Using AMG, for example, where $\Ctau^{-1}$ can be effectively approximated by the inverse diagonal of $\Ctau$, one obtains the preconditioner
\[ 
	\begin{bmatrix} \Amat^{-1} & 0 \\ 0 & \Smat^{-1} \end{bmatrix}, 
\]
where both inverses are approximated. As already mentioned, there are more complex preconditioners for the implicit approach which promise effectiveness independent of certain parameters~\cite{LeeMW17, HonK18}. In our examples, however, the relation between the parameters is rather moderate and we found them to be less efficient than a solution based on the Schur complement.
\subsubsection*{linear solvers}
In the decoupled case, the matrices $\Amat$ and $\Ctau$ are symmetric and positive definite. Hence, we apply the {\em preconditioned conjugate gradient method} for both subsystems. For an implicit time discretization, on the other hand, the conjugate gradient method does not necessarily converge, since the full system is not positive definite. We can, however, still exploit the symmetric structure of the system by using {\em MinRes}. 

% inexact solves
For the proposed scheme, it can be seen from the proof of Theorem~\ref{thm:general-convergence} that the main importance of the first $K-1$ steps is only to improve the consistency of the variables $p$ and $u$ with regards to the first equation. Hence, no high accuracy is required at this point and one can perform these steps in an inexact manner. Indeed, it can be observed experimentally that already a few iterations of the linear solver (or even only preconditioner steps) are sufficient. 
%
%\subsubsection*{parallelization}
%When running any of the schemes in a practical setting, one will often have access to multiple cores, and one should try to use these resources efficiently. We will not go into detail here about parallelization strategies, as usually the finite element framework used (in our case FEniCS) takes care of distributing the work, usually by assigning each degree of freedom to one of the cores. Resources will then be used efficiently if at any step in the computation the degerees of freedom that are being worked on are distributed equally and the connectivity to degrees of freedom on other cores is small. As such one would expect that the implicit Euler method, which always works on all degerees of freedom at the same time, saturates the cores better for smaller problems. For the semi-explicit methods one should take care, that degrees of freedom are equally distributed for either variable $u$ or $p$ considered seperately, as the algorithm considers only one of the variables at each step.
%
\subsubsection*{runtime comparison}
We conclude this section with a runtime comparison of the fixed-stress, the implicit Euler, and the newly proposed scheme~\eqref{eq:scheme}. For this, we use the example from Section~\ref{sec:numerics:brain} and vary the Biot modulus~$M$, leading to problems with $\omega = 2.8$ (the original setting) as well as $\omega=10$ and $\omega=100$. All calculations were run on two Xeon E5-2630 processors (together having 16 cores of 2.4 GHz). 
The corresponding results are shown in Figure~\ref{fig:runtimeComparison}. Therein, one can clearly see the linear dependence between runtime and error as long as the considered schemes converge. As already observed in the previous subsection, fixed-stress only shows the linear rate as long as the number of inner iterations~$K$ is sufficiently large. Moreover, the fully implicit scheme has much larger runtimes than the novel iterative approach. 
This even holds true for larger $\omega$, see Figure \ref{fig:runtimeComparison2}. For $\omega=100$, however, the advantages of the proposed scheme compared to the fixed-stress scheme get lost.
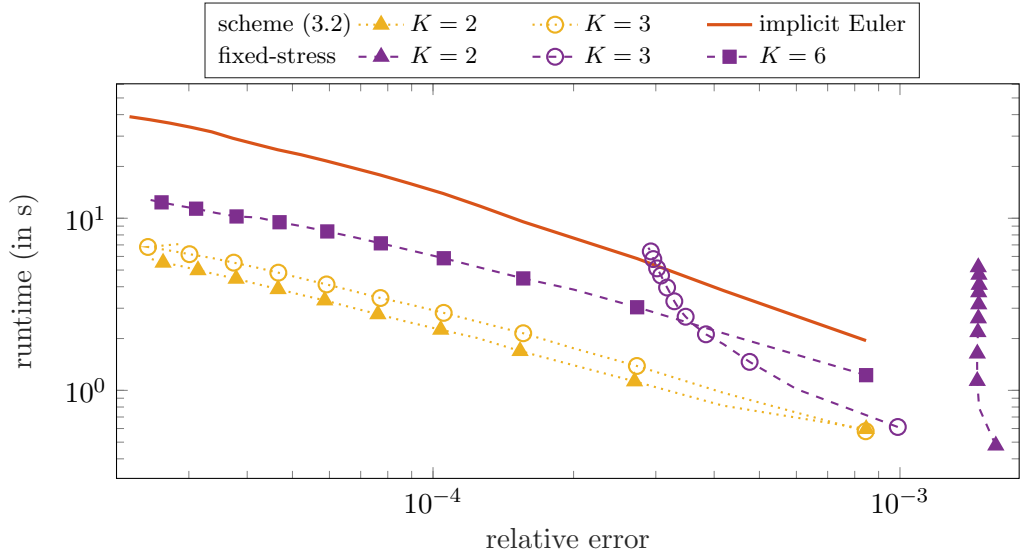
\begin{figure}
%%%%%%% omega = 2.8 %%%%%%%%
\begin{tikzpicture}
		\begin{axis}[%
		width=0.8\textwidth,
		height=0.35\textwidth,
		at={(0\textwidth,0\textwidth)},
		scale only axis,
		unbounded coords=jump,
		xlabel style={font=\color{white!15!black}},
		xlabel={relative error},
		xmode=log,
		ymode=log,
		% yminorticks=true,
		ylabel style={font=\color{white!15!black}},
		ylabel={runtime (in $\second$)},
		axis background/.style={fill=white},
		legend columns = 4,
		legend style={
			at={(0.89,1.11)},
			anchor=east,
			legend cell align=left,
			align=left,
			draw=white!15!black,
			font=\Small
		},
		xmin = 0.000021,
		xmax = 0.0018,
		mark repeat=2,
		% mark phase=4,
		mark size=3.0,
		]
		\addlegendimage{empty legend}
		\addlegendentry{scheme~\eqref{eq:scheme}}

		\addplot [color = mycolor4, dotted, mark=triangle*, thick, every mark/.append style={solid}] table [col sep=semicolon, x index = {1}, y index = {7}] {experiments/output/runtime_comparison_omega_2.csv};
		\addlegendentry{$K = 2$\qquad}
		\addplot [color = mycolor4, dotted, mark=o, thick, every mark/.append style={solid}] table [col sep=semicolon, x index = {2}, y index = {8}] {experiments/output/runtime_comparison_omega_2.csv};
		\addlegendentry{$K = 3$\qquad}
		
		\addlegendentry{implicit Euler}
		
		\addplot [color = mycolor2, very thick, every mark/.append style={solid}] table [col sep=semicolon, x index = {6}, y index = {12}] {experiments/output/runtime_comparison_omega_2.csv};
		
		\addlegendimage{empty legend}		
		\addlegendentry{fixed-stress}

		\addplot [color = mycolor3, dashed, mark=triangle*, thick, every mark/.append style={solid}] table [col sep=semicolon, x index = {3}, y index = {9}] {experiments/output/runtime_comparison_omega_2.csv};
		\addlegendentry{$K = 2$\quad}
		\addplot [color = mycolor3, dashed, mark=o, thick, every mark/.append style={solid}] table [col sep=semicolon, x index = {4}, y index = {10}] {experiments/output/runtime_comparison_omega_2.csv};
		\addlegendentry{$K = 3$\quad}
		\addplot [color = mycolor3, dashed, mark=square*, mark size=2.3, thick, every mark/.append style={solid}] table [col sep=semicolon, x index = {5}, y index = {11}] {experiments/output/runtime_comparison_omega_2.csv};
		\addlegendentry{$K = 6$}

		\addlegendimage{empty legend}
		\end{axis}
		\end{tikzpicture}
		\caption{Comparison of runtimes of the fixed-stress, the implicit Euler and the novel iterative scheme~\eqref{eq:scheme} for different numbers of inner iterations and $\omega = 2.8$.  }
	\label{fig:runtimeComparison}
\end{figure}

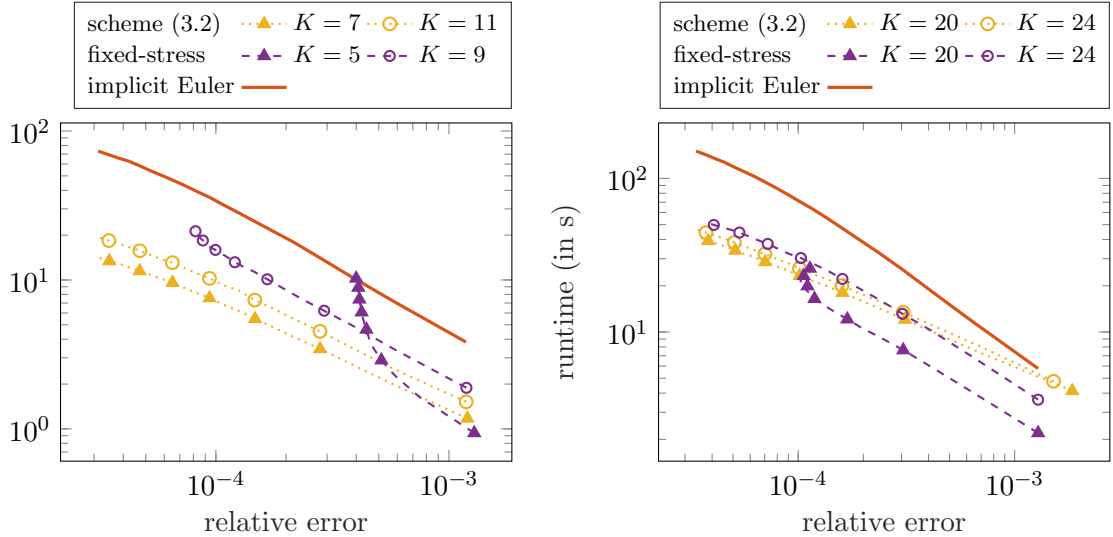
\begin{figure}
%	\centering
	%%%%%%% omega = 10 %%%%%%%%
\begin{tikzpicture}
		\begin{axis}[%
		width=0.4\textwidth,
		height=0.3\textwidth,
		at={(0\textwidth,0\textwidth)},
		scale only axis,
		unbounded coords=jump,
		xlabel style={font=\color{white!15!black}},
		xlabel={relative error},
		xmode=log,
		ymode=log,
		% yminorticks=true,
		ylabel style={font=\color{white!15!black}},
		axis background/.style={fill=white},
		legend columns = 3,
		legend style={
			at={(1,1.2)},
			anchor=east,
			legend cell align=left,
			align=left,
			draw=white!15!black,
			font=\Small
		},
		% xmin = 2.7,
		% xmax = 1e-2,
		% ymin = 0.00002,
		% ymax = 0.6,
		mark repeat=3,
		% mark phase=4,
		mark size=2.4,
		]
		\addlegendimage{empty legend}
		\addlegendentry{scheme~\eqref{eq:scheme}}

		\addplot [color = mycolor4, dotted, mark=triangle*, thick, thick, every mark/.append style={solid}] table [col sep=semicolon, x index = {1}, y index = {7}] {experiments/output/runtime_comparison_omega_10.csv};
		\addlegendentry{$K = 7$}
		\addplot [color = mycolor4, dotted, mark=o, thick, thick, every mark/.append style={solid}] table [col sep=semicolon, x index = {2}, y index = {8}] {experiments/output/runtime_comparison_omega_10.csv};
		\addlegendentry{$K = 11$}
		% \addlegendimage{empty legend}
		% \addlegendentry{}

		\addlegendimage{empty legend}
		\addlegendentry{fixed-stress}

		\addplot [color = mycolor3, dashed, mark=triangle*, thick, every mark/.append style={solid}] table [col sep=semicolon, x index = {3}, y index = {9}] {experiments/output/runtime_comparison_omega_10.csv};
		\addlegendentry{$K = 5$}
		% \addplot [color = mycolor3, dashed, mark=o, thick, every mark/.append style={solid}] table [col sep=semicolon, x index = {4}, y index = {10}] {experiments/output/runtime_comparison_omega_10.csv};
		% \addlegendentry{$K = 6$}
		\addplot [color = mycolor3, dashed, mark=o, thick, every mark/.append style={solid}, mark size=1.9] table [col sep=semicolon, x index = {5}, y index = {11}] {experiments/output/runtime_comparison_omega_10.csv};
		\addlegendentry{$K = 9$}

		\addlegendimage{empty legend}
		\addlegendentry{implicit Euler}

		\addplot [color = mycolor2, very thick] table [col sep=semicolon, x index = {6}, y index = {12}] {experiments/output/runtime_comparison_omega_10.csv};
		\addlegendentry{}
		\addlegendimage{empty legend}
		\addlegendentry{}
		\end{axis}
%		\end{tikzpicture}
%	
	%%%%%%% omega = 100 %%%%%%%%%	
%	\begin{tikzpicture}
		\begin{axis}[%
			width=0.4\textwidth,
			height=0.3\textwidth,
			at={(0.53\textwidth,0\textwidth)},
			scale only axis,
			unbounded coords=jump,
			xlabel style={font=\color{white!15!black}},
			xlabel={relative error},
			xmode=log,
			ymode=log,
			% yminorticks=true,
			ylabel style={font=\color{white!15!black}},
			ylabel={runtime (in $\second$)},
			axis background/.style={fill=white},
			legend columns = 3,
			legend style={
				at={(1.0,1.2)},
				anchor=east,
				legend cell align=left,
				align=left,
				draw=white!15!black,
				font=\Small
			},
			% xmin = 2.7,
			% xmax = 1e-2,
			% ymin = 0.00002,
			% ymax = 0.6,
			mark repeat=3,
			% mark phase=4,
			mark size=2.4,
			]
			% our scheme			
			\addlegendimage{empty legend}
			\addlegendentry{scheme~\eqref{eq:scheme}}

			\addplot [color = mycolor4, mark=triangle*, dotted, thick, thick, every mark/.append style={solid}] table [col sep=semicolon, x index = {1}, y index = {7}] {experiments/output/runtime_comparison_omega_100.csv};
			\addlegendentry{$K = 20$}
			\addplot [color = mycolor4, mark=o, thick, dotted, thick, every mark/.append style={solid}] table [col sep=semicolon, x index = {2}, y index = {8}] {experiments/output/runtime_comparison_omega_100.csv};
			\addlegendentry{$K = 24$}
			% \addlegendimage{empty legend}
			% \addlegendentry{}

			\addlegendimage{empty legend}
			\addlegendentry{fixed-stress}
			
			% fixed stress
			\addplot [color = mycolor3, dashed, mark=triangle*, thick, every mark/.append style={solid}] table [col sep=semicolon, x index = {3}, y index = {9}] {experiments/output/runtime_comparison_omega_100.csv};
			\addlegendentry{$K = 20$}
			% \addplot [color = mycolor3, dashed, mark=o, thick, every mark/.append style={solid}] table [col sep=semicolon, x index = {4}, y index = {10}] {experiments/output/runtime_comparison_omega_100.csv};
			% \addlegendentry{$K = 21$}
			\addplot [color = mycolor3, dashed, mark=o, thick, every mark/.append style={solid}, mark size=1.9] table [col sep=semicolon, x index = {5}, y index = {11}] {experiments/output/runtime_comparison_omega_100.csv};
			\addlegendentry{$K = 24$}

			\addlegendimage{empty legend}
			\addlegendentry{implicit Euler}

			% implicit Euler
			\addplot [color = mycolor2, very thick] table [col sep=semicolon, x index = {6}, y index = {12}] {experiments/output/runtime_comparison_omega_100.csv};
			\addlegendentry{}
		\end{axis}
	\end{tikzpicture}
	\caption{Comparison of runtimes of the three schemes for different numbers of inner iterations and $\omega=10$ (left) and $\omega=100$ (right).  }
	\label{fig:runtimeComparison2}
\end{figure}
%
%
%=============================================================================
%=========  Conclusions
%=============================================================================
\section{Conclusions}
% Novel scheme
In this work, we have introduced a novel time stepping scheme for linear poroelasticity which decouples the elastic and the flow equation. For this, we combine ideas from classical iterative and non-iterative semi-explicit schemes. 
% first-order convergence
We have proven first-order convergence with an a priori bound on the number of inner iterations depending only on the coupling parameter~$\omega$. This allows the application to a larger class of materials (in contrast to the semi-explicit Euler scheme~\cite{AltMU21}) without the need of further stabilization parameters (in contrast to iterative schemes). Numerical experiments further show the competitiveness of the proposed scheme, especially for moderate coupling parameters~$\omega$ as they appear, e.g., in geomechanical applications.
%
%
%=============================================================================
%=========  Acknowledgments
%=============================================================================
\section*{Acknowledgments} 
Both authors acknowledge the support of the Deutsche Forschungsgemeinschaft (DFG, German Research Foundation) through the project 467107679. 
Moreover, parts of this work were carried out while the first author was affiliated with the Institute of Mathematics and the Centre for Advanced Analytics and Predictive Sciences (CAAPS) at the University of Augsburg.
%
%
%=============================================================================
%=========  Bibs
%=============================================================================
\bibliographystyle{alpha}
\bibliography{references}

\newcommand{\etalchar}[1]{$^{#1}$}
\begin{thebibliography}{RNM{\etalchar{+}}03}

\bibitem[AM22]{AltM22}
R.~Altmann and R.~Maier.
\newblock A decoupling and linearizing discretization for poroelasticity with
  nonlinear permeability.
\newblock {\em SIAM J. Sci. Comput.}, 44(3):B457--B478, 2022.

\bibitem[AMU21a]{AltMU21}
R.~{Altmann}, R.~{Maier}, and B.~{Unger}.
\newblock Semi-explicit discretization schemes for weakly-coupled
  elliptic-parabolic problems.
\newblock {\em Math. Comp.}, 90(329):1089--1118, 2021.

\bibitem[AMU21b]{AltMU21c}
R.~Altmann, V.~Mehrmann, and B.~Unger.
\newblock Port-{H}amiltonian formulations of poroelastic network models.
\newblock {\em Math. Comput. Model. Dyn. Sys.}, 27(1):429--452, 2021.

\bibitem[AMU22]{AltMU22}
R.~Altmann, R.~Maier, and B.~Unger.
\newblock Semi-explicit integration of second order for weakly coupled
  poroelasticity.
\newblock {\em ArXiv preprint 2203.16664}, 2022.

\bibitem[AS92]{ArmS92}
F.~Armero and J.~C. Simo.
\newblock A new unconditionally stable fractional step method for nonlinear
  coupled thermomechanical problems.
\newblock {\em Internat. J. Numer. Methods Engrg.}, 35(4):737--766, 1992.

\bibitem[AS22]{ArfS22}
J.~Arf and B.~Simeon.
\newblock A space-time isogeometric method for the partial
  differential-algebraic system of {B}iot's poroelasticity model.
\newblock {\em Electron. Trans. Numer. Anal.}, 55:310--340, 2022.

\bibitem[BGL05]{BenGL05}
M.~Benzi, G.~H. Golub, and J.~Liesen.
\newblock Numerical solution of saddle point problems.
\newblock {\em Acta Numer.}, 14:1--137, 2005.

\bibitem[Bio41]{Bio41}
M.~A. Biot.
\newblock General theory of three-dimensional consolidation.
\newblock {\em J. Appl. Phys.}, 12(2):155--164, 1941.

\bibitem[BRK17]{BauRK17}
M.~Bause, F.~A. Radu, and U.~K\"ocher.
\newblock Space--time finite element approximation of the {B}iot poroelasticity
  system with iterative coupling.
\newblock {\em Comput. Methods Appl. Mech. Engrg.}, 320:745--768, 2017.

\bibitem[Cia88]{Cia88}
P.~G. Ciarlet.
\newblock {\em Mathematical elasticity. Vol. {I}}.
\newblock North-Holland, Amsterdam, 1988.

\bibitem[DC93]{DetC93}
E.~Detournay and A.~H.~D. Cheng.
\newblock Fundamentals of poroelasticity.
\newblock In {\em Analysis and design methods}, pages 113--171. Elsevier, 1993.

\bibitem[EM09]{ErnM09}
A.~Ern and S.~Meunier.
\newblock A posteriori error analysis of {E}uler-{G}alerkin approximations to
  coupled elliptic-parabolic problems.
\newblock {\em ESAIM: Math. Model. Numer. Anal.}, 43(2):353--375, 2009.

\bibitem[ERT21]{EliRT21}
E.~Eliseussen, M.~E. Rognes, and T.~B. Thompson.
\newblock A-posteriori error estimation and adaptivity for multiple-network
  poroelasticity.
\newblock {\em ArXiv preprint 2111.13456}, 2021.

\bibitem[HK18]{HonK18}
Q.~Hong and J.~Kraus.
\newblock Parameter-robust stability of classical three-field formulation of
  {B}iot's consolidation model.
\newblock {\em Electron. Trans. Numer. Anal.}, 48:202--226, 2018.

\bibitem[JB01]{JohB01}
K.~A. Johnson and J.~A. Becker.
\newblock The whole brain atlas.
\newblock \url{http://www.med.harvard.edu/aanlib/cases/caseNA/pb9.htm}, 2001.

\bibitem[JCLT20]{JuCLT20}
G.~Ju, M.~Cai, J.~Li, and J.~Tian.
\newblock Parameter-robust multiphysics algorithms for {B}iot model with
  application in brain edema simulation.
\newblock {\em Math. Comput. Simulat.}, 177:385--403, 2020.

\bibitem[KTJ11a]{KimTJ11b}
J.~Kim, H.~A. Tchelepi, and R.~Juanes.
\newblock Stability and convergence of sequential methods for coupled flow and
  geomechanics: drained and undrained splits.
\newblock {\em Comput. Methods Appl. Mech. Engrg.}, 200(23-24):2094--2116,
  2011.

\bibitem[KTJ11b]{KimTJ11a}
J.~Kim, H.~A. Tchelepi, and R.~Juanes.
\newblock Stability and convergence of sequential methods for coupled flow and
  geomechanics: fixed-stress and fixed-strain splits.
\newblock {\em Comput. Methods Appl. Mech. Engrg.}, 200(13-16):1591--1606,
  2011.

\bibitem[LMW17]{LeeMW17}
J.~J. Lee, K.-A. Mardal, and R.~Winther.
\newblock Parameter-robust discretization and preconditioning of {B}iot's
  consolidation model.
\newblock {\em SIAM J. Sci. Comput.}, 39(1):A1--A24, 2017.

\bibitem[LPMR19]{LeePMR19}
J.~J. Lee, E.~Piersanti, K.-A. Mardal, and M.~E. Rognes.
\newblock A mixed finite element method for nearly incompressible
  multiple-network poroelasticity.
\newblock {\em J. Sci. Comput.}, 41(2):A722--A747, 2019.

\bibitem[ML92]{MurL92}
M.~A. Murad and A.~F.~D. Loula.
\newblock Improved accuracy in finite element analysis of {B}iot's
  consolidation problem.
\newblock {\em Comput. Method. Appl. M.}, 95(3):359--382, 1992.

\bibitem[MW13]{MikW13}
A.~Mikeli\'{c} and M.~F. Wheeler.
\newblock Convergence of iterative coupling for coupled flow and geomechanics.
\newblock {\em Comput. Geosci.}, 17(3):455--461, 2013.

\bibitem[PRV22]{PieRV22}
E.~Piersanti, M.~E. Rognes, and V.~Vinje.
\newblock Are brain displacements and pressures within the parenchyma induced
  by surface pressure differences? {A} computational modelling study.
\newblock {\em bioRxiv 2022.09.07.506967}, 2022.

\bibitem[PW07]{PhiW07a}
P.~J. Phillips and M.~F. Wheeler.
\newblock A coupling of mixed and continuous {G}alerkin finite element methods
  for poroelasticity {I}: the continuous in time case.
\newblock {\em Comput. Geosci.}, 11(2):131--144, 2007.

\bibitem[PW08]{PhiW08}
P.~J. Phillips and M.~F. Wheeler.
\newblock A coupling of mixed and discontinuous {G}alerkin finite-element
  methods for poroelasticity.
\newblock {\em Comput. Geosci.}, 12(4):417--435, 2008.

\bibitem[RNM{\etalchar{+}}03]{RNM+03}
T.~Roose, P.~A. Netti, L.~L. Munn, Y.~Boucher, and R.~K. Jain.
\newblock Solid stress generated by spheroid growth estimated using a linear
  poroelasticity model.
\newblock {\em Microvasc. Res.}, 66(3):204--212, 2003.

\bibitem[SBK{\etalchar{+}}19]{StoBKNR19}
E.~Storvik, J.~W. Both, K.~Kumar, J.~M. Nordbotten, and F.~A. Radu.
\newblock On the optimization of the fixed-stress splitting for {B}iot's
  equations.
\newblock {\em Int. J. Numer. Meth. Eng.}, 120(2):179--194, 2019.

\bibitem[Sho00]{Sho00}
R.~E. Showalter.
\newblock Diffusion in poro-elastic media.
\newblock {\em J. Math. Anal. Appl.}, 251(1):310--340, 2000.

\bibitem[SM92]{SimM92}
J.~C. Simo and C.~Miehe.
\newblock Associative coupled thermoplasticity at finite strains: Formulation ,
  numerical analysis and implementation.
\newblock {\em Comput. Method. Appl. M.}, 98(1):41--104, 1992.

\bibitem[SM98]{SetM98}
A.~Settari and F.~M. Mourits.
\newblock A coupled reservoir and geomechanical simulation system.
\newblock {\em Spe Journal}, 3(03):219--226, 1998.

\bibitem[VVK14]{VabVK14}
P.~N. Vabishchevich, M.~V. Vasil’eva, and A.~E. Kolesov.
\newblock Splitting scheme for poroelasticity and thermoelasticity problems.
\newblock {\em Comp. Math. Math. Phys.}, 54(8):1305--1315, 2014.

\bibitem[WG07]{WheG07}
M.~F. Wheeler and X.~Gai.
\newblock Iteratively coupled mixed and {G}alerkin finite element methods for
  poro-elasticity.
\newblock {\em Numer. Meth. Part. D. E.}, 23(4):785--797, 2007.

\bibitem[YH02]{YanH02}
U.~M. Yang and V.~E. Henson.
\newblock Boomeramg: A parallel algebraic multigrid solver and preconditioner.
\newblock {\em Appl. Numer. Math.}, 41(1):155--177, 2002.

\bibitem[Zob10]{Zob10}
M.~D. Zoback.
\newblock {\em Reservoir Geomechanics}.
\newblock Cambridge University Press, Cambridge, 2010.

\bibitem[ZSMC19]{ZhaSMC19}
G.~Zhai, M.~Shirzaei, M.~Manga, and X.~Chen.
\newblock Pore-pressure diffusion, enhanced by poroelastic stresses, controls
  induced seismicity in {O}klahoma.
\newblock {\em Proceedings of the National Academy of Sciences},
  116(33):16228--16233, 2019.

\end{thebibliography}
\end{document}